\numberwithin{equation}{section}
\theoremstyle{plain}
\newtheorem{theorem}{Theorem}[section]
\newtheorem{lemma}[theorem]{Lemma}
\newtheorem{corollary}[theorem]{Corollary}
\newtheorem{proposition}[theorem]{Proposition}
\newtheorem*{corollary*}{Corollary}
\theoremstyle{definition}
\newtheorem{example}[theorem]{Example}
\newtheorem{definition}[theorem]{Definition}
\newtheorem{assumption}[theorem]{Assumption}
\theoremstyle{remark}
\newtheorem{remark}[theorem]{Remark}
\newcommand{\Implies}[2]{$\text{\ref{#1}}\implies\text{\ref{#2}}$}%
\DeclarePairedDelimiter\evaluated{.}{\rvert}
\reDeclarePairedDelimiterInnerWrapper\evaluated{nostarnonscaled}{%
    \mathopen{}#2\mathclose{#3}%
}
\reDeclarePairedDelimiterInnerWrapper\evaluated{star}{%
    \mathopen{}\mathclose\bgroup #1\hskip -\nulldelimiterspace \relax
    #2\aftergroup\egroup #3%
}
\DeclarePairedDelimiterX\event[1]{\{}{\}}{#1}
\DeclareMathOperator{\conv}{conv}
\DeclareMathOperator{\relint}{relint}
\DeclareMathOperator{\effdom}{dom}
\newcommand{\argmin}{\operatornamewithlimits{arg\,min}}
\DeclarePairedDelimiterX\innerp[2]{\langle}{\rangle}{#1,#2}
\providecommand\given{\,|\,}
\newcommand\SetSymbol[1][]{%
  \nonscript\:#1\vert
  \allowbreak
  \nonscript\:
  \mathopen{}}
\DeclarePairedDelimiterX\set[1]\{\}{%
  \renewcommand\given{\SetSymbol[\delimsize]}
  #1 
}
\DeclarePairedDelimiterX\parens[1]\lparen\rparen{%
  \renewcommand\given{\SetSymbol[\delimsize]}
  #1   
}
\DeclarePairedDelimiterX\bracks[1]\lbrack\rbrack{%
  \renewcommand\given{\SetSymbol[\delimsize]}
  #1   
}
\DeclarePairedDelimiterX\braces[1]\lbrace\rbrace{%
  \renewcommand\given{\SetSymbol[\delimsize]}
  #1   
}
\DeclarePairedDelimiterX\angles[1]\langle\rangle{%
  \renewcommand\given{\SetSymbol[\delimsize]}
  #1   
}
\DeclarePairedDelimiter\abs{\lvert}{\rvert}
\DeclarePairedDelimiterX\norm[1]\lVert\rVert{\ifblank{#1}{\:\cdot\:}{#1}}
\newcommand{\opnorm}{\@ifstar\@opnorms\@opnorm}
\newcommand{\@opnorms}[1]{%
  \left|\mkern-1.5mu\left|\mkern-1.5mu\left|
   #1
  \right|\mkern-1.5mu\right|\mkern-1.5mu\right|
}
\newcommand{\@opnorm}[2][]{%
  \mathopen{#1|\mkern-1.5mu#1|\mkern-1.5mu#1|}
  #2
  \mathclose{#1|\mkern-1.5mu#1|\mkern-1.5mu#1|}
}
\providecommand{\card}[1]{\lvert#1\rvert}
\newcommand{\Integer}{\mathbb{Z}}
\newcommand{\Real}{\mathbb{R}}
\DeclareMathOperator{\vecspan}{span}
\providecommand{\G}{}
\renewcommand{\G}{\mathcal{G}}
\renewcommand{\H}{\mathcal{H}}
\newcommand{\X}{\mathcal{X}}
\newcommand{\PermGroup}{\mathcal{P}}
\newcommand{\OrthoGroup}{\mathcal{O}}
\crefname{assumption}{\textbf{assumption}}{assumptions}
\Crefname{assumption}{Assumption}{Assumptions}
\crefname{equation}{}{}
\Crefname{equation}{}{}
\crefname{definition}{\textbf{definition}}{definitions}
\Crefname{definition}{Definition}{Definitions}
\crefname{assumption}{\textbf{assumption}}{assumptions}
\Crefname{assumption}{Assumption}{Assumptions}
\begin{document}

\title{Computational~Sufficiency, Reflection~Groups, and Generalized~Lasso Penalties}
\author{Vincent Q. Vu\\%
Department of Statistics\\%
The Ohio State University}

\maketitle

\begin{abstract}
We study estimators with generalized lasso penalties within the 
computational sufficiency framework introduced by \citet{vu:group}. 
By representing these penalties as support functions of 
zonotopes and more generally Minkowski sums of line segments and rays, 
we show that there is a natural reflection group associated with the 
underlying optimization problem. A consequence of this point of view is 
that for large classes of estimators sharing the same penalty, the 
penalized least squares estimator is computationally minimal sufficient. 
This means that 
all such estimators can be computed by refining the output of any algorithm 
for the least squares case. An interesting technical component is our analysis 
of coordinate descent on the dual problem. A key insight 
is that the iterates are obtained by reflecting and averaging, so they 
converge to an element of the dual feasible set that is minimal with 
respect to a ordering induced by the group associated with the penalty. 
Our main application is fused~lasso/total~variation denoising 
and isotonic regression on arbitrary graphs.  In those cases the associated 
group is a permutation group. \end{abstract}

\section{Introduction}
\label{sec:introduction}
Let $x \in \X$ be a vector of observations, and suppose we have a 
collection of procedures that we could potentially apply to the data. 
\begin{quote}
What functions of the data contain sufficient information for computing 
all of the procedures under consideration? 
\end{quote}
This is the fundamental question of computational sufficiency 
\citep{vu:group}, and this paper seeks answers to questions like these when 
the procedures are based on generalized lasso penalties. 

\subsection{Motivation}
Suppose the data are ordered and the underlying signal is suspected to be smooth. Then we might consider the fused lasso 
\citep{tibshirani.saunders.ea:sparsity}. 
This is also known as total variation denoising, 
and it is defined to be the solution of the penalized least squares problem: 
\begin{equation}
    \label{eq:1d_fused_lasso}
    \min_{\theta \in \Real^n} \frac{1}{2} \norm{x - \theta}^2 
    + \lambda \sum_{j=1}^{n-1} \abs{ \theta_{j+1} - \theta_j }
    \,.
\end{equation}
The tuning parameter $\lambda \geq 0$ controls the smoothness of the estimate, 
with larger $\lambda$ resulting in a more piecewise constant estimate.
This problem is relatively well-understood in terms of 
statistical theory \citep{mammen.:locally,davies.kovac:local,%
levy-leduc.harchaoui:catching,rinaldo:properties,rojas.wahlberg:on,%
lin.sharpnack.ea:sharp} and algorithms \citep{davies.kovac:local,%
tibshirani.taylor:solution,hofling:path,johnson:dynamic,condat:direct}. 
For example, nearly optimal theoretical error bounds have been derived 
and $O(n)$ time complexity algorithms are available.

Some might argue that the applicability of \Cref{eq:1d_fused_lasso} depends 
on the underlying distribution of the data.  For example, if the data were 
binary then it seems prudent to replace \Cref{eq:1d_fused_lasso} by the penalized logistic problem, 
\begin{equation}
    \label{eq:generalized_fused_lasso}
    \min_{\theta \in \Real^p}
    \braces[\bigg]{ \sum_{j=1}^n f(x_j) - x_j \theta_j } 
    + \lambda \sum_{j=1}^{n-1} \abs{ \theta_{j+1} - \theta_j }\,,
\end{equation}
with $f(t) = \log(1 + \exp(t))$.  This is a seemingly more difficult 
optimization problem, but surprisingly, 
\citet[section 3.2]{dumbgen.kovac:extensions} discovered that solutions to 
problems of the form \Cref{eq:generalized_fused_lasso}, with an exponential 
family log-likelihood, could be obtained by a simple transformation of the 
penalized least squares solution \Cref{eq:1d_fused_lasso}.

In another direction, if the data were arranged on a 2d grid---like an image---then we might replace the first differences in 
\Cref{eq:1d_fused_lasso} by differences across adjacent nodes of the grid. 
\citet{tibshirani.taylor:solution} have pointed 
out that this and \Cref{eq:1d_fused_lasso} are instances of a 
\emph{generalized lasso} problem,
\begin{equation}
    \label{eq:generalized_lasso}
    \min_\theta \frac{1}{2} \norm{x - \theta}^2 
    + \lambda \norm{D \theta}_1 \,,
\end{equation}
where $D$ is a $m \times n$ matrix that encodes desired structural and 
sparsity properties of the estimate, and $\norm{}_1$ is the $\ell_1$ norm which sums the absolute values of the entries of a vector.
The 2d variant replaces $D$ by a first difference operator on a grid and is 
known as 2d total variation denoising \citep{rudin.osher.ea:nonlinear}. More 
generally, this formulation supports arbitrary graphs and other penalties beside fused 
lasso.  Finally, we could generalize both 
\Cref{eq:generalized_fused_lasso} and \Cref{eq:generalized_lasso} to obtain a doubly generalized lasso
\begin{equation}
    \label{eq:doubly_generalized_lasso}
    \min_\theta \phi(\theta) - \innerp{x}{\theta} 
    + \lambda \norm{D \theta}_1 \,,
\end{equation}
where $\phi : \Real^n \to \Real$ is a ``nice enough'' convex function. 
In progressing from \Cref{eq:1d_fused_lasso} to 
\Cref{eq:doubly_generalized_lasso} 
we seem to be increasing the flexibility of our methods at the cost of 
potentially more challenging computation. This paper will show that this 
is not necessarily the case.

\subsection{What this paper is really about}

Since \Cref{eq:generalized_fused_lasso} can be reduced to 
\Cref{eq:1d_fused_lasso}, we might as well concentrate our efforts on 
efficient solvers for the least squares case. 
This is an algorithmic perspective, but we 
could also think about the reduction from an inferential point of view. 
The sparsifying effect of the $\ell_1$ penalty induces piecewise constant 
solutions of \Cref{eq:1d_fused_lasso}.  The location of change points between 
the pieces are often an inferential target 
\citep{levy-leduc.harchaoui:catching}. If the transformation 
from \Cref{eq:1d_fused_lasso} to \Cref{eq:generalized_fused_lasso} is 
smooth, then no new change points can be introduced by the 
transformation, so \Cref{eq:generalized_fused_lasso} cannot have any 
more power than \Cref{eq:1d_fused_lasso}.

Both the algorithmic and inferential points of view are interesting to us, 
but the underlying result that enables those interpretations is the 
discovery by \citet{dumbgen.kovac:extensions} of a relationship between the 
least squares fused lasso and generalized fused lasso problems.  It turns out that there is a deeper reason behind this phenomenon that will allow us to 
relate the least squares generalized lasso \Cref{eq:generalized_lasso} 
to the doubly generalized lasso \Cref{eq:doubly_generalized_lasso}, 
and similarly for extensions to other methods such as isotonic regression 
\citep{barlow.bartholomew.ea:statistical} that are not strictly based on generalized lasso penalties.

Here is an informal selection of our results. There is a group of orthogonal transformations associated with each specific generalized lasso 
penalty. Solutions of the 
doubly generalized lasso problem \Cref{eq:doubly_generalized_lasso} can be 
obtained by a simple transformation of the solution of 
\Cref{eq:generalized_lasso} whenever $\phi$ is 
invariant under the action of that group. When the penalty is the fused lasso on a graph, that group is simply a group of permutations restricted to the connected components of the graph. 
This immediately recovers the aforementioned result of 
\citet{dumbgen.kovac:extensions}. 
In the language of computational sufficiency \citep{vu:group}, we can say that 
\Cref{eq:generalized_lasso} is \emph{computationally sufficient} for 
\Cref{eq:doubly_generalized_lasso}.  Moreover, if we consider an entire class 
of procedures of the form \Cref{eq:doubly_generalized_lasso},  
sharing the same penalty and group of invariances, 
then penalized least squares \Cref{eq:generalized_lasso} is a member of 
that class and hence \emph{computationally minimal}.

The paper is organized as follows. 
We begin in \Cref{sec:preliminaries} by
reviewing concepts from \citet{vu:group} that are used throughout the 
paper. \Cref{sec:g_minimality} introduces the notion of a group minimal 
element and develops a general theory of computational sufficiency and 
minimality based on finding a group minimal element for a certain dual problem. The 
remainder of the paper specializes this theory to estimators based on so-called solar penalties. These penalties and their connection with 
generalized lasso penalties and reflection groups are discussed in \Cref{sec:solar_penalties}. 
The main theoretical result on these penalties is presented in 
\Cref{sec:polygonal_path} where we prove the existence of 
group minimal elements by analyzing a dual coordinate descent algorithm. This 
allows the computational sufficiency theory to be applied 
to estimators based on solar penalties. 
\Cref{sec:applications} discusses the consequences for specific examples 
including lasso, fused lasso, isotonic regression, and trend filtering.
Finally, \Cref{sec:discussion} discusses our results and directions forward. 
\section{Preliminaries}
\label{sec:preliminaries}
We begin by reviewing some concepts from \citet{vu:group} and 
introducing notation that will be used throughout the paper.

\subsection{Computational sufficiency}
Let $\X$ be a Euclidean space with the usual inner product $\innerp{}{}$ and norm $\norm{}$. 
The framework of computational sufficiency views procedures as 
set-valued functions on $\X$.  The basic idea is very simple. We wish 
to find functions of the data that contain sufficient information 
for computing every procedure in a collection.
\begin{definition}
Let $\mathcal{M}$ be a collection of set-valued 
functions on $\X$. A function $R$ on $\mathcal{X}$ is 
\emph{computationally sufficient} for $\mathcal{M}$ if for each  
$T \in \mathcal{M}$, there exists a set-valued function $f$ such that 
$f(T, R(x)) \neq \emptyset$ whenever $T(x) \neq \emptyset$ and 
\begin{equation*}
    f(T, R(x)) \subseteq T(x)
    \quad \text{for all} \quad x \in \X
    \,.
\end{equation*}
A function $U$ on $\X$ is \emph{computationally necessary} for $\mathcal{M}$  
if for each $R$ that is computationally sufficient for $\mathcal{M}$, 
there exists $h$ such that 
\begin{equation*}
    U(x) = h(R(x)) \quad \text{for all} \quad x \in \mathcal{X} 
    \,.
\end{equation*}
If $U$ is computationally necessary and sufficient, 
then $U$ is \emph{computationally minimal}.
\end{definition}
An easy way to establish computational minimalilty is to check that 
a sufficient reduction belongs to the collection under consideration.
\begin{lemma}[\citet{vu:group}]
\label{lem:trivially_minimal}
If $R \in \mathcal{M}$ is singleton-valued and 
computationally sufficient for $\mathcal{M}$, then 
$R$ is computationally minimal.
\end{lemma}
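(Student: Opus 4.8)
The plan is to verify that $R$ is computationally \emph{necessary} for $\mathcal{M}$; since $R$ is computationally sufficient by hypothesis, this makes it computationally minimal by definition. So I would fix an arbitrary function $R'$ that is computationally sufficient for $\mathcal{M}$, and the goal is to produce an $h$ with $R(x) = h(R'(x))$ for every $x \in \X$.

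The key step is to feed $R$ itself into the sufficiency property of $R'$. Because $R \in \mathcal{M}$, the definition of computational sufficiency applied to the member $T = R$ furnishes a set-valued function $f$ with $f(R, R'(x)) \subseteq R(x)$ for all $x$, and $f(R, R'(x)) \neq \emptyset$ whenever $R(x) \neq \emptyset$.

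Now I would invoke that $R$ is singleton-valued. When $R(x) \neq \emptyset$ it is a single point, so the inclusion $f(R, R'(x)) \subseteq R(x)$ together with $f(R, R'(x)) \neq \emptyset$ forces the equality $f(R, R'(x)) = R(x)$; when $R(x) = \emptyset$ the inclusion already gives $f(R, R'(x)) = \emptyset = R(x)$. In either case $f(R, R'(x)) = R(x)$ for all $x$. Setting $h(\,\cdot\,) := f(R, \,\cdot\,)$ --- a legitimate function of the single argument $R'(x)$, since $R$ is a fixed member of $\mathcal{M}$ --- yields $h(R'(x)) = R(x)$ for all $x$, which is precisely computational necessity.

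I do not expect a genuine obstacle here: the argument is just a careful unwinding of the two definitions. The only point meriting a word of care is the degenerate case $R(x) = \emptyset$, which is absorbed automatically by the inclusion. The essential mechanism is that a singleton-valued sufficient reduction leaves no slack in the containment $f(R,\cdot) \subseteq R(\cdot)$, so the reconstruction map $f(R,\cdot)$ recovers $R$ exactly.
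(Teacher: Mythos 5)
Your proof is correct. The paper itself does not reprove this lemma (it is quoted from \citet{vu:group}), but your argument is exactly the standard one: apply the sufficiency of an arbitrary competing reduction $R'$ to the member $T = R$, and observe that a nonempty subset of a singleton must equal it, so $h(\cdot) = f(R,\cdot)$ reconstructs $R$ and establishes necessity. The handling of the degenerate case $R(x)=\emptyset$ is also fine.
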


\subsection{Expofam-type estimators}
The procedures that we study in this paper are generalizations of penalized 
maximum likelihood for exponential family models. Let $C \subseteq \X$ be a 
nonempty set. The \emph{support function} of $C$ is the function 
$h_C : \X \to \Real \cup \set{+\infty}$ with values
\begin{equation*}
    h_C(x) = \sup_{z \in C} \innerp{z}{x} \,.
\end{equation*}
Note that $h_C = h_{\conv(C)}$ where $\conv(C)$ is the closed convex 
hull of $C$ \citep[Proposition 7.13]{bauschke.combettes:convex}, so the 
support function ``sees'' only the convex hull. 
The functions that we work with in this paper will generally be extended 
real valued. The set of  proper (finite for at least one point) closed (lower semicontinuous) convex functions $f :\X \to \Real \cup \set{+\infty}$ is denoted by $\Gamma_0(\X)$.
\begin{definition}
A set-valued function $T$ on $\X$ is an \emph{expofam-type} estimator if it 
has the form 
\begin{equation}
    \label{eq:expofam-type}
    T(x) = \argmin_\theta \phi(\theta) - \innerp{x}{\theta} + h_C(\theta) \,,
\end{equation}
where $\phi \in \Gamma_0(\X)$ is called the \emph{generator} of $T$, 
and $C \subseteq \X$ is a nonempty closed convex set called the 
\emph{penalty support set} of $T$. 
\end{definition}
Since \Cref{eq:expofam-type} is a generalization of penalized maximum 
likelihood for exponential families, many popular penalized estimators can 
be viewed as expofam-type estimators.  For example, when $x \in \Real^n$, 
isotonic regression and least squares fused lasso can both be expressed in 
the form
\begin{equation}
    \label{eq:iid_generator}
    \braces[\bigg]{ \sum_{i=1}^n f(\theta_i) - x_i \theta_i } + h_C(\theta)\,,
\end{equation}
where $f \in \Gamma_0(\Real)$. For isotonic regression, $h_C$ is the convex 
indicator of the cone of monotone vectors; for fused lasso, $h_C$ is 
proportional to the $\ell_1$ norm of the first differences.  We will describe 
these examples in more detail in \Cref{sec:solar_penalties}.  
See \citet{vu:group} for other examples.

\subsection{Group majorization}
\citet{vu:group} observed that the generators of expofam-type 
estimators often exhibit symmetries.  Note that the generator in 
\Cref{eq:iid_generator}\,,
\begin{equation*}
    \phi(\theta) = \sum_{i=1}^n f(\theta_i) \,,
\end{equation*}
is invariant under permutations of $\theta_1,\ldots,\theta_n$.  
This type of symmetry appears when $T$ corresponds to an i.i.d.\ statistical 
model. For example, $f$ may be the log-partition function of a one-dimensional 
exponential family.  
We describe symmetries like these in terms of a group of transformations. 
Let $\OrthoGroup(\X)$ be the orthogonal group of 
$\X$. In this paper $\G \subseteq \OrthoGroup(\X)$ 
will generically denote a compact group acting linearly on $\X$ and we denote the action of 
$g \in \G$ on $x \in \X$ by $g \cdot x$. A function $\phi$ is $\G$-invariant 
if $\phi(g \cdot x) = \phi(x)$ for all $x \in \X$ and $g \in \G$. 
A set $K \subseteq \X$ is $\G$-invariant if $g \cdot K = K$ for all $g \in \G$. 
The orbit of $x$ under $\G$ is the set
\begin{equation*}
    \G \cdot x = \set{g \cdot x \given g \in \G} \,.
\end{equation*}
The convex hull of the orbit, $\conv(G \cdot x)$, 
is the orbitope of $\G$ with respect to $x$. The group induces 
a preorder (reflexive and transitive) on $\X$ via inclusion of its 
orbitopes.
\begin{definition}
Let $x, y \in \X$. We say that $x$ is \emph{$\G$-majorized} by $y$, 
denoted by $x \preceq_\G y$, if $x \in \conv(G \cdot y)$.
\end{definition}
When $\G$ is the permutation group $\PermGroup_n$ acting 
on $\Real^n$, the ordering $\preceq_\G$ is exactly the (classical) 
majorization ordering \citep[see][]{marshall.olkin.ea:inequalities}.
$\G$-majorization was developed as an extension to more general subgroups of 
the orthogonal group and studied in depth by \citet{eaton.perlman:reflection}. 
One important discovery from that work is that if $\G$ is a finite reflection 
group (defined in \Cref{sec:reflection_groups}), then $\preceq_\G$ is a
cone ordering on the fundamental domain of the group. In the case of the 
permutation group, this phenomenon is realized by the monotone rearrangement 
definition of majorization. See \citet{giovagnoli.wynn:g-majorization,eaton:on,steerneman:g-majorization,francis.wynn:subgroup} for additional developments. 
Chapter 14.C of \citet{marshall.olkin.ea:inequalities} gives an overview of 
the connections between classical majorization and $\G$-majorization. 
Our main use of $\G$-majorization is in its application to convex optimization 
via the following lemma.

\begin{lemma}[\cite{giovagnoli.wynn:g-majorization}]
\label{lem:g_majorization_equivalence}
\label{eq:g_monotone}
The follow statements are equivalent:
\begin{enumerate}[label=(\alph*),ref=(\alph*)]
    \item \label{case:g_majorization}
    $x \preceq_\G y$,
    \item \label{case:g_majorization_orbit_inclusion}
    $\conv(G \cdot x) \subseteq \conv(G \cdot y)$,
    \item \label{case:g_majorization_support_function}
    $h_{\G \cdot x}(u) \leq h_{\G \cdot y}(u)$ for all $u \in \X$,
    \item \label{case:g_majorization_function_minimization}
    $f(x) \leq f(y)$ for all $\G$-invariant convex $f$.
\end{enumerate}
\end{lemma}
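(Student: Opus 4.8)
The plan is to show that \ref{case:g_majorization}, \ref{case:g_majorization_orbit_inclusion}, and \ref{case:g_majorization_support_function} are equivalent, and to close the loop by proving that \ref{case:g_majorization} implies \ref{case:g_majorization_function_minimization} and that \ref{case:g_majorization_function_minimization} implies \ref{case:g_majorization_support_function}. Two facts will be used throughout, both consequences of $\G$ being a compact group acting by orthogonal maps: each orbit $\G\cdot z$ is compact, so $\conv(\G\cdot z)$ is a compact convex set (Carathéodory's theorem in the Euclidean space $\X$), coincides with its own closed convex hull, and satisfies $h_{\G\cdot z}=h_{\conv(\G\cdot z)}$; and $\conv(\G\cdot z)$ is $\G$-invariant, since $g'\cdot\conv(\G\cdot z)=\conv\bigl(g'\cdot(\G\cdot z)\bigr)=\conv(\G\cdot z)$ for every $g'\in\G$, because left translation is a bijection of $\G$.

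For the equivalence of \ref{case:g_majorization} and \ref{case:g_majorization_orbit_inclusion} I would note that \ref{case:g_majorization_orbit_inclusion} trivially implies \ref{case:g_majorization} via $x\in\G\cdot x\subseteq\conv(\G\cdot x)$, and that conversely $x\in\conv(\G\cdot y)$ together with the $\G$-invariance and convexity of $\conv(\G\cdot y)$ forces $\conv(\G\cdot x)\subseteq\conv(\G\cdot y)$. For the equivalence of \ref{case:g_majorization_orbit_inclusion} and \ref{case:g_majorization_support_function} I would invoke the standard duality between inclusion of closed convex sets and pointwise ordering of support functions: writing $A=\conv(\G\cdot x)$ and $B=\conv(\G\cdot y)$, monotonicity of the supremum gives that $A\subseteq B$ implies $h_A\le h_B$, while a Hahn--Banach separation of a point of $A\setminus B$ from $B$ gives the converse; one then substitutes $h_A=h_{\G\cdot x}$ and $h_B=h_{\G\cdot y}$.

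To obtain \ref{case:g_majorization}$\,\Rightarrow\,$\ref{case:g_majorization_function_minimization} I would use Carathéodory to write $x=\sum_i\lambda_i\,(g_i\cdot y)$ as a finite convex combination of points of the compact orbit $\G\cdot y$, and then combine Jensen's inequality with the $\G$-invariance of $f$ to get $f(x)\le\sum_i\lambda_i f(g_i\cdot y)=\sum_i\lambda_i f(y)=f(y)$. The delicate step is \ref{case:g_majorization_function_minimization}$\,\Rightarrow\,$\ref{case:g_majorization_support_function}. For a fixed direction $u\in\X$, I would apply \ref{case:g_majorization_function_minimization} to the test function $f_u(z):=h_{\G\cdot u}(z)=\sup_{g\in\G}\innerp{z}{g\cdot u}$. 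This $f_u$ is convex, being a supremum of linear functionals, and $\G$-invariant in $z$: for $g'\in\G$ one has $f_u(g'\cdot z)=\sup_{g\in\G}\innerp{z}{(g')^{-1}g\cdot u}=f_u(z)$, using orthogonality of $g'$ and that $g\mapsto(g')^{-1}g$ permutes $\G$. Using orthogonality once more and reindexing the supremum, $f_u(z)=\sup_{g\in\G}\innerp{g\cdot z}{u}=h_{\G\cdot z}(u)$. Applying \ref{case:g_majorization_function_minimization} to $f_u$ then yields $h_{\G\cdot x}(u)=f_u(x)\le f_u(y)=h_{\G\cdot y}(u)$, and since $u\in\X$ was arbitrary this is exactly \ref{case:g_majorization_support_function}.

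I expect the main obstacle to be this last construction --- recognizing that the correct $\G$-invariant convex function with which to probe the support-function inequality at a direction $u$ is the support function $h_{\G\cdot u}$ of the orbit of $u$ itself, and then verifying both its $\G$-invariance and the identity $f_u(z)=h_{\G\cdot z}(u)$. Everything else is bookkeeping: compactness of the orbits is what lets one treat the closed convex hull as an ordinary convex hull and apply Carathéodory and separation without further care.
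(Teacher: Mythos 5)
Your proof is correct. Note that the paper does not actually prove this lemma --- it is quoted from \citet{giovagnoli.wynn:g-majorization} --- but your key device for the delicate implication \Implies{case:g_majorization_function_minimization}{case:g_majorization_support_function}, namely probing with the $\G$-invariant convex test function $f_u(z)=h_{\G\cdot u}(z)=\sup_{g\in\G}\innerp{g\cdot z}{u}$, is exactly the construction the paper uses in its proof of the closely analogous \Cref{lem:g_minimal_equivalance}, and the remaining implications (orbit-hull invariance, support-function duality via separation, Carath\'eodory plus Jensen) are the standard route.
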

The last statement in the above lemma is a generalization of Schur convexity 
to general groups. We will make extensive use of this $\G$-monotonicity condition throughout the paper. 
\section{Group minimality to computational minimality}
\label{sec:g_minimality}
The $\G$-monotonicity condition in \Cref{eq:g_monotone} suggests that 
it may be possible to universally optimize families of $\G$-invariant convex 
functions by finding minimal elements in the $\G$-majorization ordering. 
In this section we will show how this basic observation leads to a 
computationally minimal reduction for expofam-type estimators with 
$\G$-invariant generators.

\subsection{Convex duality and G-minimality}
Using standard arguments from convex analysis 
\citep[see][Chapter 15.2, Example 13.3]{bauschke.combettes:convex}, 
the Fenchel dual problem to \Cref{eq:expofam-type} can be written as 
\begin{equation}
  \label{eq:dual}
  \begin{aligned}
      &\text{minimize}
        &   &
        \phi^*(y)
      \\
      &\text{subject to}
        &   & y \in x - C
        \,,
  \end{aligned}
\end{equation}
where $\phi^*$ is the convex conjugate of $\phi$:
\begin{equation*}
    \phi^*(u) = \sup_\theta \innerp{u}{\theta} - \phi(\theta) \,.
\end{equation*}
Since $\phi$ is a proper closed convex function, 
its conjugate is also a proper closed convex function 
\citep[Corollary 13.38]{bauschke.combettes:convex}. 
We call \Cref{eq:expofam-type} the primal problem, and \Cref{eq:dual} 
the dual problem. Let us focus on the dual problem for now. 
One way to interpret the dual problem is to think about regression. The variable $y$ in \Cref{eq:dual} should be thought of as 
the fitted value, while the set $C$ represents constraints on the residual. 
Since $$(\text{data}) = (\text{fitted value}) + (\text{residual})\,,$$
we can think of the dual problem as subtracting residuals from $x$ to 
get the fitted value $y$, subject to a constraint on the residual.

Note that the dual feasible set, $x - C$, 
is independent of $\phi$---it is the dual counterpart of the  
penalty $h_C$. If $\phi$ is $\G$-invariant, then so is $\phi^*$:
\begin{align*}
    \phi^*(g \cdot u)
    &= \sup_\theta \innerp{g \cdot u}{\theta} - \phi(\theta) \\
    &= \sup_\theta 
        \innerp{u}{g^{-1} \cdot \theta} - \phi(g^{-1} \cdot \theta) 
     = \phi^*(u)\,.
\end{align*}
If we assume that $\phi$ is $\G$-invariant, then \Cref{eq:dual} involves 
minimizing a convex $\G$-invariant function over the dual feasible set 
$x - C$, so the ordering induced by $\G$ on $x - C$ may play an important 
role. This motivates the next definition.
\begin{definition}
An element $y \in K \subseteq \X$ is \emph{$\G$-minimal} 
in $K$ if $y \preceq_\G z$ for all $z \in K$.
\end{definition}
$\G$-minimality is closely tied with minimization over $K$.  
This is embodied in the following extension of 
\Cref{lem:g_majorization_equivalence}.
\begin{lemma}
\label{lem:g_minimal_equivalance}
Let $K \subseteq \X$ be a closed convex set and $y \in K$.
The following statements are equivalent:
\begin{enumerate}[label=(\alph*),ref=(\alph*)]
    \item \label{case:g_minimal}
    $y \preceq_\G z$ for all $z \in K$,
    \item \label{case:g_minimizes_convex_functions}
    $f(y) = \inf_{z \in K} f(z)$ for all $\G$-invariant convex $f$.
\end{enumerate}
\end{lemma}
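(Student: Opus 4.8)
The plan is to establish the two implications separately, invoking \Cref{lem:g_majorization_equivalence} in each direction.

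For \ref{case:g_minimal}$\,\Rightarrow\,$\ref{case:g_minimizes_convex_functions} I would argue directly from the $\G$-monotonicity characterization \ref{case:g_majorization_function_minimization}: given any $\G$-invariant convex $f$ and any $z \in K$, the hypothesis $y \preceq_\G z$ forces $f(y) \le f(z)$, so $f(y) \le \inf_{z \in K} f(z)$; the reverse inequality is trivial because $y \in K$, and hence $f(y) = \inf_{z\in K} f(z)$.

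The substantive direction is \ref{case:g_minimizes_convex_functions}$\,\Rightarrow\,$\ref{case:g_minimal}. Here I would fix an arbitrary $z_0 \in K$ and aim to show $y \in \conv(\G \cdot z_0)$ by feeding \ref{case:g_minimizes_convex_functions} a carefully chosen family of test functions. For $u \in \X$, define $f_u(w) = h_{\G \cdot w}(u) = \sup_{g \in \G} \innerp{g \cdot w}{u}$. As a pointwise supremum of linear functionals of $w$ it is convex, and because $\G$ is a group (so $g \mapsto g' g$ permutes $\G$) it is $\G$-invariant; since $\G$ is compact and acts orthogonally, the orbit $\G \cdot w$ is bounded, so $f_u$ is finite-valued, hence a legitimate $\G$-invariant convex function. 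Applying \ref{case:g_minimizes_convex_functions} to $f_u$ and using $z_0 \in K$ then gives $h_{\G \cdot y}(u) = f_u(y) \le f_u(z_0) = h_{\G \cdot z_0}(u)$ for every $u \in \X$. Finally, \Cref{lem:g_majorization_equivalence} (the implication \ref{case:g_majorization_support_function}$\,\Rightarrow\,$\ref{case:g_majorization}) turns this support-function domination into $y \preceq_\G z_0$, which is the desired conclusion.

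I expect the only real obstacle to be checking that the $f_u$ are admissible inputs to \ref{case:g_minimizes_convex_functions} — namely, finiteness of $f_u$ together with compactness of the orbit and of the orbitope $\conv(\G\cdot w)$, and the genuine $\G$-invariance of $f_u$ — all of which follow from the standing assumptions that $\G$ is compact and orthogonal. An alternative for the hard direction, avoiding the citation of \ref{case:g_majorization_support_function}$\,\Rightarrow\,$\ref{case:g_majorization}, is a separation argument: if $y \notin \conv(\G \cdot z_0)$, a hyperplane separating $y$ from the compact convex orbitope yields $u$ with $\innerp{y}{u} > h_{\G \cdot z_0}(u) \ge f_u(y)$, contradicting $f_u(y) \le f_u(z_0)$.
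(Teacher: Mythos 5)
Your proposal is correct and follows essentially the same route as the paper: the easy direction via the $\G$-monotonicity characterization in \Cref{lem:g_majorization_equivalence}, and the converse by testing \ref{case:g_minimizes_convex_functions} against the orbit support functions $f_u(w) = h_{\G\cdot w}(u)$ and invoking the support-function criterion of \Cref{lem:g_majorization_equivalence}. The extra checks on finiteness and the alternative separation argument are fine but not needed beyond what the paper already does.
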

In general, a $\G$-minimal element 
may not exist in a given set $K$, however existence becomes \emph{easier} 
as the size of the group increases, and there is actually only one possible 
candidate for a $\G$-minimal element.
\begin{lemma}
\label{lem:g_minimality_criteria}
Let $K$ be a closed convex set.
\begin{enumerate}[label=(\alph*),ref=(\alph*)]
\item \label{statement:g_minimality_trivial} 
If $\G$ is the trivial group, then $K$ has a $\G$-minimal element if and 
only if $K$ is a singleton.
\item \label{statement:g_minimality_increasing}
If $\mathcal{H} \subseteq \mathcal{G}$ and $y$ is $\H$-minimal in $K$, 
then $y$ is also $\G$-minimal in $K$.
\item \label{statement:g_minimality_orthogonal}
$K$ has a unique $\OrthoGroup(\X)$-minimal element.
\item \label{statement:g_minimality_minimum_norm}
If $y \in K$ is $\G$-minimal in $K$, then it is unique and equal to the 
minimum norm element of $K$.
\end{enumerate}
\end{lemma}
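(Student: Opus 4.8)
The plan is to reduce the whole lemma to one geometric fact: for the full orthogonal group, $\G$-majorization collapses to comparison of norms. Once that is in hand, \ref{statement:g_minimality_trivial} and \ref{statement:g_minimality_increasing} are one-line unwindings of the definition of $\preceq_\G$, and \ref{statement:g_minimality_minimum_norm} is a corollary of the first three parts plus the classical projection theorem. So I would prove the parts in the order \ref{statement:g_minimality_trivial}, \ref{statement:g_minimality_increasing}, \ref{statement:g_minimality_orthogonal}, \ref{statement:g_minimality_minimum_norm}, each using the earlier ones.

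For \ref{statement:g_minimality_trivial}: when $\G$ is trivial, the orbit of any $z$ is $\{z\}$, so $\conv(\G\cdot z)=\{z\}$ and $y\preceq_\G z$ holds precisely when $y=z$. Hence $K$ admits a $\G$-minimal element iff every $z\in K$ equals that element, i.e.\ iff $K$ is a singleton. For \ref{statement:g_minimality_increasing}: for each $z$ we have $\H\cdot z\subseteq\G\cdot z$, so $\conv(\H\cdot z)\subseteq\conv(\G\cdot z)$, and therefore $y\preceq_\H z$ implies $y\preceq_\G z$; applying this for every $z\in K$ promotes $\H$-minimality of $y$ to $\G$-minimality.

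The substantive step is \ref{statement:g_minimality_orthogonal}. I would first compute, for $z,u\in\X$, that $h_{\OrthoGroup(\X)\cdot z}(u)=\sup_{g\in\OrthoGroup(\X)}\innerp{g\cdot z}{u}=\norm{z}\,\norm{u}$, since $\OrthoGroup(\X)$ acts transitively on the sphere of radius $\norm{z}$ and the maximum of $\innerp{\cdot}{u}$ over that sphere is $\norm{z}\,\norm{u}$ by Cauchy--Schwarz. By the support-function characterization (statement~\ref{case:g_majorization_support_function} of \Cref{lem:g_majorization_equivalence}) this means $x\preceq_{\OrthoGroup(\X)}z$ iff $\norm{x}\le\norm{z}$; equivalently $\conv(\OrthoGroup(\X)\cdot z)$ is the closed ball of radius $\norm{z}$. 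Consequently $y$ is $\OrthoGroup(\X)$-minimal in $K$ iff $\norm{y}=\inf_{z\in K}\norm{z}$, i.e.\ iff $y$ is a minimum-norm element of $K$. Since $K$ is nonempty, closed, and convex, the projection theorem (minimize the strictly convex $\norm{\cdot}^2$ over $K$, using the parallelogram law to extract a Cauchy minimizing sequence) gives exactly one such element, so $K$ has a unique $\OrthoGroup(\X)$-minimal element.

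Finally, \ref{statement:g_minimality_minimum_norm} follows by chaining: because $\G\subseteq\OrthoGroup(\X)$, part~\ref{statement:g_minimality_increasing} shows a $\G$-minimal $y\in K$ is also $\OrthoGroup(\X)$-minimal, and part~\ref{statement:g_minimality_orthogonal} identifies it as the unique minimum-norm element of $K$; alternatively one may invoke \Cref{lem:g_minimal_equivalance} directly with the $\G$-invariant convex function $\norm{\cdot}^2$. I do not expect a genuine obstacle here: the only place needing real care is \ref{statement:g_minimality_orthogonal}---identifying the orbitope of $\OrthoGroup(\X)$ with a ball and invoking the projection theorem---and one should flag the standing assumption that $K$ is nonempty, without which \ref{statement:g_minimality_orthogonal} would be false.
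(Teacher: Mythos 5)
Your proposal is correct and follows essentially the same route as the paper: orbitope inclusion for part (b), identifying the $\OrthoGroup(\X)$-orbitope with a norm ball so that $\OrthoGroup(\X)$-majorization reduces to norm comparison for part (c), and chaining (b) and (c) for part (d). You simply flesh out the steps the paper labels ``trivial'' or ``easy to see,'' and your remark that $K$ must be nonempty for part (c) is a fair (if minor) point the paper leaves implicit.
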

The proofs of \Cref{lem:g_minimal_equivalance,lem:g_minimality_criteria} 
are given in \Cref{sec:proofs}.
\begin{theorem}
\label{thm:g_minimality_and_minimal_norm}
A nonempty closed convex set $K \subseteq X$ has an element that is 
$\G$-minimal in $K$ if and only if
\begin{equation*}
    \argmin_{y \in K} \norm{y} \preceq_\G z
\end{equation*}
for all $z \in K$, and when this holds the minimum norm element is 
$\G$-minimal.
\end{theorem}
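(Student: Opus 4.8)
The plan is to derive this as a corollary of \Cref{lem:g_minimality_criteria}\ref{statement:g_minimality_minimum_norm}, which already tells us that \emph{if} a $\G$-minimal element exists then it is unique and coincides with $\argmin_{y \in K}\norm{y}$. So one direction is immediate: if $K$ has a $\G$-minimal element $y$, then by that lemma $y = \argmin_{y'\in K}\norm{y'}$, and by the definition of $\G$-minimality $y \preceq_\G z$ for all $z \in K$; hence the displayed condition holds with the minimum norm element as witness. For this we only need to know the minimum norm element is well-defined, which follows because $K$ is nonempty closed convex in a Euclidean space (projection of the origin onto $K$).

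The converse is the substantive direction, but it is essentially a tautology once unwound. Suppose $\argmin_{y\in K}\norm{y} \preceq_\G z$ for all $z \in K$. Write $y_0 = \argmin_{y\in K}\norm{y}$, which again exists and is unique by closedness and convexity of $K$. Note $y_0 \in K$. Then the hypothesis says precisely that $y_0 \preceq_\G z$ for every $z \in K$, which is the definition of $y_0$ being $\G$-minimal in $K$. So $K$ has a $\G$-minimal element, namely $y_0$, and it is the minimum norm element as claimed.

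The only thing to be careful about is the well-definedness and notation of $\argmin_{y\in K}\norm{y}$: I would remark at the outset that since $K$ is a nonempty closed convex subset of the Euclidean space $\X$, the projection theorem guarantees a unique point of $K$ of minimum norm, so $\argmin_{y\in K}\norm{y}$ denotes that single point and the relation $\argmin_{y\in K}\norm{y} \preceq_\G z$ is unambiguous. Given that, both implications are one line each. I do not anticipate a real obstacle here — the content of the theorem is entirely carried by \Cref{lem:g_minimality_criteria}, and this statement is just the clean ``if and only if'' repackaging of parts \ref{statement:g_minimality_orthogonal}–\ref{statement:g_minimality_minimum_norm} together with the trivial observation that $\G$-minimality of a specific point is literally the quantified majorization condition in the display. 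If anything, the place to spend a sentence is explaining why the minimum norm element is the \emph{only} candidate (so that checking the displayed condition for that one point suffices), which is exactly what part \ref{statement:g_minimality_minimum_norm} supplies.
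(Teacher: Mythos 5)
Your proposal is correct and matches the paper's treatment: the paper explicitly states that \Cref{thm:g_minimality_and_minimal_norm} is simply a restatement of \Cref{lem:g_minimality_criteria}\ref{statement:g_minimality_minimum_norm} combined with the definition of $\G$-minimality, which is exactly the two-line unwinding you give. Your added remark on the existence and uniqueness of the minimum norm element via projection onto a nonempty closed convex set is a sensible (if implicit in the paper) piece of bookkeeping.
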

\Cref{thm:g_minimality_and_minimal_norm} is simply a restatement of 
\ref{statement:g_minimality_minimum_norm} in 
\Cref{lem:g_minimality_criteria} together with the definition of 
$\G$-minimality. The theorem is important for two reasons.  Firstly, it 
reduces the problem of finding a $\G$-minimal element to checking that the 
minimum norm element is $\G$-minimal. Secondly, it opens up different 
perspective on $\G$-minimality. Rather than thinking about $\G$-minimality 
in terms of a fixed group $\G$, it is more fruitful to determine the 
\emph{smallest group} such that the minimum norm element of $K$ 
is $\G$-minimal.

\subsection{The least squares reduction}
Now let us return to the dual problem \Cref{eq:dual} and suppose that $y_*$ is 
$\G$-minimal in $x - C$.  By \Cref{lem:g_minimal_equivalance}, $y_*$ is a dual 
solution for \emph{every} dual problem \Cref{eq:dual} where $\phi$ is 
$\G$-invariant.  In order to relate the dual solution to the primal solution, 
we need strong duality to hold. We shall assume this is the case in a generic 
way.
\begin{assumption}[Strong Duality]
    \label{assumption:strong_duality}
    The generator $\phi$ and penalty $h_C$ satisfy sufficient conditions to ensure that strong duality holds.
\end{assumption}
\citet[Chapters 15 and 19.1]{bauschke.combettes:convex} present a variety of 
sufficient conditions for \Cref{assumption:strong_duality} to hold. One 
particularly simple one is that $C$ is compact and $\phi$ is finite 
on some subset of $C$. Another is that
\begin{equation*}
    \relint(\effdom \phi) \cap \relint(\effdom h_C) \neq \emptyset \,,
\end{equation*}
where $\effdom$ is the effective domain (set of points where the function is finite) and $\relint$ is the relative interior (interior relative to the 
affine hull). When strong duality holds, the set of primal solutions can be recovered from an arbitrary dual solution $y$ via 
\begin{equation*}
    \argmin_\theta \phi(\theta) - \innerp{y}{\theta}
\end{equation*}
\citep[see][Corollary 19.2]{bauschke.combettes:convex}. So for a fixed 
$x$, if $x -C$ has a $\G$-minimal element, then we can solve the primal 
problem by finding the $\G$-minimal element of the dual feasible set. 
The latter does not depend on $\phi$ as long as $\phi$ is $\G$-invariant.

In order for the procedure we just discussed to be applicable to the problem 
of computational sufficiency, we to be able to find a $\G$-minimal in $x - C$ 
for each $x \in \X$. Since the \emph{only} candidate for a $\G$-minimal 
element is the minimum norm element, we can continue to develop our theory by 
assuming that it is indeed $\G$-minimal, and we can find it by least squares.  This turns out to be computationally minimal.
\begin{theorem}
\label{thm:least_squares_minimal}
Let $C \subseteq \X$ be a closed convex set, and suppose that $x - C$ 
contains a $\G$-minimal element for each $x \in \X$. If $\mathcal{M}$ is 
a collection of expofam-type estimators with penalty support set $C$ and   
$\G$-invariant generators $\phi \in \Gamma_0(\X)$ satisfying 
\Cref{assumption:strong_duality}, then 
\begin{equation*}
    U(x) = \argmin_{y \in x - C} \norm{y} 
\end{equation*}
is computationally sufficient for $\mathcal{M}$.  
In particular, if $T \in \mathcal{M}$ has generator $\phi$, then 
\begin{equation}
    \label{eq:primal_recovery}
    T(x) 
    = \argmin_\theta \phi(\theta) - \innerp{U(x)}{\theta} 
    \quad
    \text{for all}
    \quad
    x \in \X\,.
\end{equation}
Moreover, $U$ is in $\mathcal{M}$, is equal to the penalized least squares 
estimator,
\begin{equation*}
    U(x) = \argmin_{\theta} \frac{1}{2} \norm{x - \theta}^2 + h_C(\theta) \,,
\end{equation*}
and is computationally minimal for $\mathcal{M}$.
\end{theorem}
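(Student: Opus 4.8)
The plan is to assemble results that are already in place. First I would pin down the function $U$: since $C$ is nonempty, closed and convex, so is $x - C$, and hence $\argmin_{y \in x-C}\norm{y}$ is the projection of the origin onto $x-C$, a single point; thus $U$ is a well-defined singleton-valued function. By hypothesis $x-C$ has a $\G$-minimal element, so \Cref{thm:g_minimality_and_minimal_norm} identifies that element with the minimum-norm element: $U(x)$ is the $\G$-minimal element of $x-C$ for every $x$.

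Next I would prove \Cref{eq:primal_recovery}, from which computational sufficiency is immediate. Fix $T \in \mathcal{M}$ with generator $\phi$. Because $\phi$ is $\G$-invariant, so is $\phi^*$ (computed earlier in this section), and the dual \Cref{eq:dual} minimizes the $\G$-invariant convex function $\phi^*$ over the closed convex set $x-C$. Since $U(x)$ is $\G$-minimal in $x-C$, \Cref{lem:g_minimal_equivalance} applied to $f = \phi^*$ gives $\phi^*(U(x)) = \inf_{y \in x-C}\phi^*(y)$, so $U(x)$ is a dual optimal solution. Under \Cref{assumption:strong_duality} the primal solution set equals $\argmin_\theta \phi(\theta) - \innerp{y}{\theta}$ for any dual solution $y$; taking $y = U(x)$ yields $T(x) = \argmin_\theta \phi(\theta) - \innerp{U(x)}{\theta}$, which is \Cref{eq:primal_recovery}. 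Setting $f(T,r) := \argmin_\theta \phi_T(\theta) - \innerp{r}{\theta}$, with $\phi_T$ the generator of $T$, we get $f(T, U(x)) = T(x)$ for all $x$, so the two defining conditions of computational sufficiency hold with equality, and $U$ is computationally sufficient for $\mathcal{M}$.

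Then I would show that $U$ belongs to $\mathcal{M}$ and equals the penalized least squares estimator. Consider the generator $\phi_0 = \tfrac12\norm{\cdot}^2 \in \Gamma_0(\X)$, which is $\G$-invariant because $\G \subseteq \OrthoGroup(\X)$ preserves the norm. Completing the square, $\argmin_\theta \phi_0(\theta) - \innerp{x}{\theta} + h_C(\theta) = \argmin_\theta \tfrac12\norm{x-\theta}^2 + h_C(\theta)$, which is exactly the penalized least squares estimator, and strong duality holds for this generator since $\effdom\phi_0 = \X$ while $0 \in \effdom h_C$ (as $h_C(0)=0$), so the relative-interior condition is met. Hence the penalized least squares estimator $T_0$ lies in $\mathcal{M}$, and \Cref{eq:primal_recovery} applied to $T_0$ gives $T_0(x) = \argmin_\theta \phi_0(\theta) - \innerp{U(x)}{\theta} = \argmin_\theta \tfrac12\norm{\theta}^2 - \innerp{U(x)}{\theta} = \{U(x)\}$. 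Therefore $U = T_0$; in particular $U \in \mathcal{M}$ and $U$ is the penalized least squares estimator.

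Finally, since $U \in \mathcal{M}$ is singleton-valued and computationally sufficient for $\mathcal{M}$, \Cref{lem:trivially_minimal} gives that $U$ is computationally minimal. No step is a serious obstacle — the argument is a synthesis — but the two points that need genuine care are verifying that strong duality holds for the least squares generator (so that $U$ really is a member of $\mathcal{M}$), and correctly invoking \Cref{lem:g_minimal_equivalance} to see that a single $\G$-minimal element of $x-C$ is simultaneously dual-optimal for \emph{every} $\G$-invariant generator at once.
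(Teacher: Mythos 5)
Your proposal is correct and follows essentially the same route as the paper: establish that $U(x)$ is the $\G$-minimal element via \Cref{thm:g_minimality_and_minimal_norm}, use $\G$-invariance of $\phi^*$ and \Cref{lem:g_minimal_equivalance} to get dual optimality for every $T\in\mathcal{M}$, recover the primal via strong duality, and then identify $U$ with the least squares member of $\mathcal{M}$ to conclude minimality through \Cref{lem:trivially_minimal}. The only addition is your explicit verification of strong duality for the generator $\tfrac12\norm{\cdot}^2$, which the paper asserts without detail.
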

The proof is given in \Cref{sec:proofs}. 
\begin{remark}
It is basic result of convex analysis that the solution of 
\Cref{eq:primal_recovery} is given by $T(x) = \nabla \phi^*(U(x))$. 
When $\phi$ is a Legendre function \citep[Exercise 18.7]{bauschke.combettes:convex}, as is the case when $\phi$ is the log-partition 
function of a regular exponential family, 
$\nabla \phi^* = (\nabla \phi)^{-1}$.  This corresponds to a fundamental 
property of exponential families whereby the maximum likelihood estimator 
is a method of moments estimator.
\end{remark}
It is interesting to consider the meaning of \Cref{thm:least_squares_minimal} 
when $\phi$ is the log-partition function of an exponential family of 
distributions.  \Cref{eq:primal_recovery} is nothing other than the maximum 
likelihood estimator based on the statistic $U(x)$.  So the theorem says that 
when the dual feasible set admits a $\G$-minimal element, the penalized 
maximum likelihood estimators reduce to maximum likelihood with the data 
replaced by a penalized least squares fit, or equivalently, the residual from 
projecting the data onto $C$.
 
\section{Solar penalties and reflections}
\label{sec:solar_penalties}
Computational minimality of the least squares reduction in 
\Cref{thm:least_squares_minimal} depends on the existence of a $\G$-minimal 
element in the dual feasible set $x - C$ for each $x \in \X$. We can interpret 
$\G$-minimality of $y_*$ in $x - C$ as the requirement that $y_*$ be 
\emph{reachable} from any point in the dual feasible set by averaging a 
sequence of transformations restricted to $\G$. Since the only possible 
candidate is the minimum norm element, we should 
focus our efforts on finding a small group $\G$ that allows the minimum norm 
element to be \emph{reachable} for every $x$. This clearly depends on the 
geometry of $C$.  In this section we begin to explore these ideas by 
introducing a family of penalties encompassing the generalized lasso 
penalties. The key insight is that there is a natural group associated with 
each penalty that encodes the geometry of the penalty support set $C$.

\begin{definition}
A penalty is said to be a \emph{solar} penalty if it is the support function of a Minkowski sum of line segments and rays.\footnote{\emph{Solar} is an acronym for \underline{s}um \underline{o}f \underline{l}ine segments \underline{a}nd \underline{r}ays.}
\end{definition}
The generalized lasso penalty \citep{tibshirani.taylor:solution} is a special 
case of a solar penalty. Usually, it is written in matrix--vector form as
\begin{equation*}
    \lambda \norm{D \theta}_1 
    \,,
\end{equation*}
where $\lambda$ is a tuning parameter and $D$ is a penalty matrix. The main 
idea is to encourage sparsity of $D \theta$, and the matrix $D$ is chosen so 
that sparsity of $D\theta$ reflects some desired structure in the estimate.
The generalized lasso penalty can also be expressed as the support 
function of the image of the hypercube under the linear transformation $D^T$:
\begin{equation*}
    C = \set{ D^T z \given \norm{z}_\infty \leq \lambda }
    \,.
\end{equation*}
Such a set is sometimes called a \emph{zonotope}, which is a  
Minkowski sum of line segments \citep[Chapter 7]{ziegler:lectures}. Solar 
penalties, on the other hand, also allow the addition of rays. So the 
resulting set can possibly be unbounded. This allows hard constraints to be 
encoded into the penalty.

\subsection{Base representation of a solar penalty}
\begin{figure}[t]
\centering
\includegraphics[width=0.4\columnwidth]{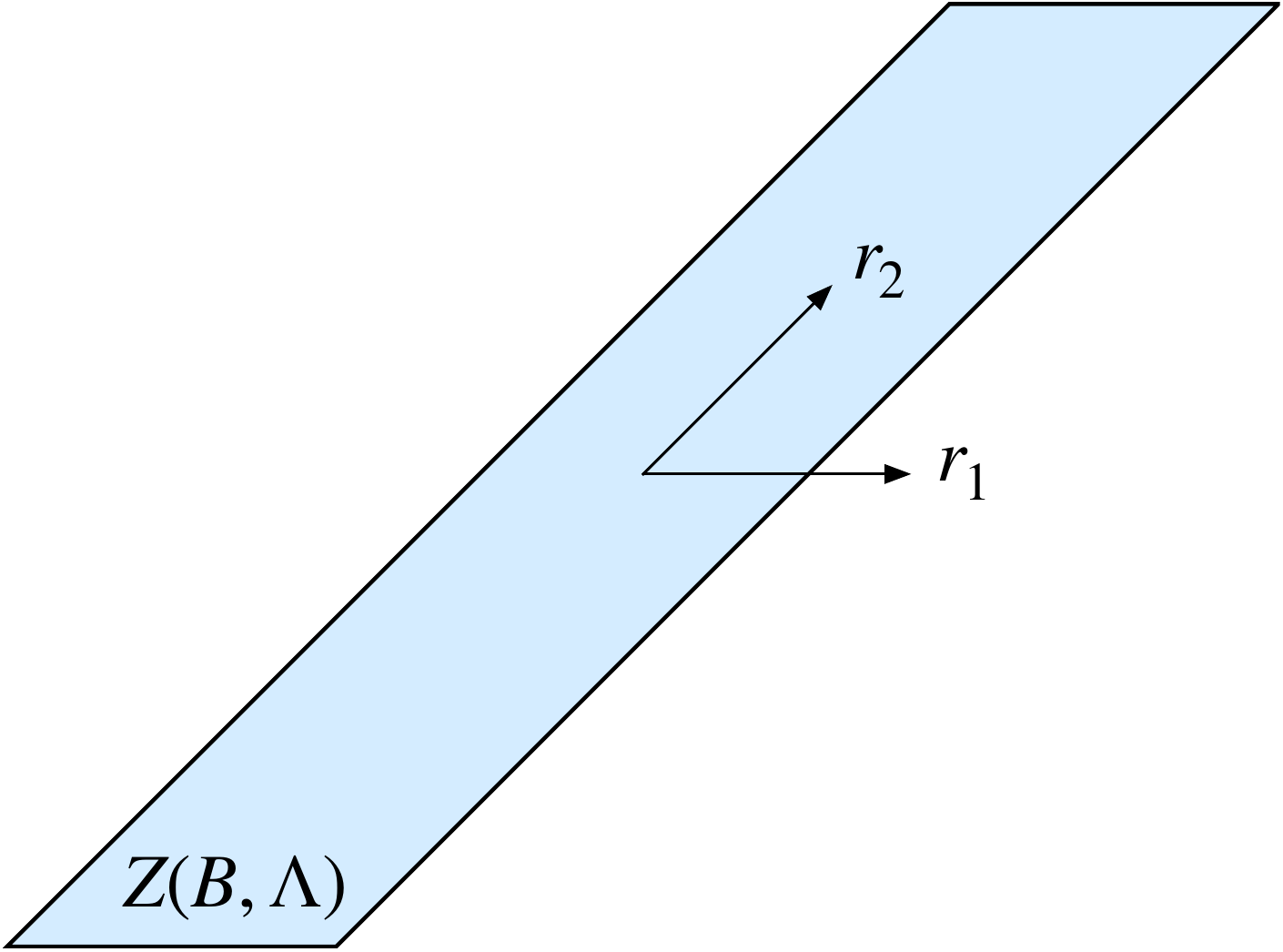}
\caption{
Solar penalty support set $Z(B,\Lambda)$ and its base vectors ($r_1,r_2$).}
\label{fig:solar_penalty}
\end{figure}
The \emph{base} representation of a solar penalty is 
a pair $(B, \Lambda)$ consisting of a tuple of unit vectors, 
$B = \parens{ r_1,\ldots,r_m }$ which we call \emph{bases} or base vectors, 
and a Cartesian product of closed intervals 
$\Lambda = I_1 \times \cdots \times I_m$, 
where $I_j \subseteq [-\infty,\infty]$, $j=1,\ldots,m$, can possibly be 
unbounded.  When convenient we may think of $B$ as being a matrix with 
unit norm columns. Then the solar penalty support set is
\begin{equation*}
    Z(B, \Lambda) 
    \coloneqq
    B \cdot \Lambda
    = 
    \set[\bigg]{ z \in \X 
    \given z = \sum_{j=1}^m \lambda_j r_j ,\; \lambda_j \in I_j }
    \,.
\end{equation*}
See \Cref{fig:solar_penalty} for an example.
The base representation of a solar penalty is useful, because it will allow us 
to connect $Z(B,\Lambda)$ to a reflection group 
(to be defined in \Cref{sec:reflection_groups}) that encodes some of the 
geometry and symmetry of the penalty.  Let us first discuss some examples 
for the case $\X = \Real^n$.

\begin{example}[Lasso]
Let $e_j$ denote the $j$th standard basis vector which has 
a $1$ in its $j$th coordinate and 0 elsewhere. 
The $\ell_1$ norm penalty has base representation
\begin{align*}
    B &= \parens[\big]{ e_1, \ldots, e_n } \\
    \Lambda &= [-\lambda,\lambda]^n \,,
\end{align*}
with $\lambda > 0$. Then 
\begin{equation*}
    Z(B,\Lambda) = \set{ z \in \Real^n \given \norm{z}_\infty \leq \lambda }
\end{equation*}
and $h_{Z(B,\Lambda)} = \norm{}_1$.
\end{example}

\begin{example}[Nonnegative regression]
If we replace the intervals in the base representation of the $\ell_1$ penalty 
by the nonpositive half of the real line, then 
\begin{align*}
    B &= \parens[\big]{ e_1, \ldots, e_n } \\
    \Lambda &= [-\infty, 0]^n\,,
\end{align*}
and $Z(B,\Lambda)$ becomes the convex cone of vectors with nonpositive entries.  The corresponding support function is the convex indicator of the 
nonnegative cone,
\begin{equation*}
    h_{Z(B,\Lambda)}(\theta) = 
    \begin{cases}
        0 & \text{if $\theta_j \geq 0$ for all $j$,} \\
        +\infty & \text{otherwise.}
    \end{cases}
\end{equation*}
This is an example of a hard constraint. We could relax the constraint by replacing the intervals by
\begin{equation*}
    \Lambda = [-\lambda, 0]^n \,.
\end{equation*}
In this case, the penalty operates like an $\ell_1$ norm on negative entries:
\begin{equation*}
    h_{Z(B,\Lambda)}(\theta) = \lambda \sum_j \abs{\min(0, \theta_j)} \,.
\end{equation*}
\end{example}

\begin{example}[Fused lasso]
The 1d fused lasso is the $\ell_1$ norm of the first differences. 
This penalty is often used 
for smoothing and signal approximation when the data follow a one-dimensional 
structure. It encourages estimates that are 
piecewise constant. A base representation of this penalty is given by
\begin{align*}
    B &= \parens[\big]{ (e_{j+1} - e_j) / \sqrt{2} \given j=1,\ldots,n-1} \\
    \Lambda &= [-\lambda,\lambda]^{n-1}
\end{align*}
Then
\begin{equation*}
    h_{Z(B,\Lambda)}(\theta) 
    = \frac{\lambda}{\sqrt{2}} \sum_{j=1}^{n-1} \abs{\theta_{j+1} - \theta_j}
    \,.
\end{equation*}
In the engineering and signal processing literature the penalty 
is also known as \emph{total variation} and more often employed in the case of 2d signals such as images \citep{rudin.osher.ea:nonlinear}.  Both the 1d and 
2d cases can be described succinctly by introducing an undirected graph 
on $\bracks{p}$ with edge set $\mathcal{E} \subseteq \bracks{p}^2$.
The 1d fused lasso correponds to a chain graph, while the 2d case corresponds 
to a 2d grid. The base representation then becomes
\begin{align*}
    B &= \parens[\big]{ (e_j - e_i) / \sqrt{2} \given \set{i,j} \in \mathcal{E} } \\
    \Lambda &= [-\lambda,\lambda]^{\card{\mathcal{E}}} \,.
\end{align*}
This graph-guided fused lasso penalty encourages estimates that are 
piecewise constant across nodes of the graph.
\end{example}
\begin{example}[Isotonic regression]
Isotonic regression applies to linearly ordered data when the goal is to 
produce a monotonic fit. There is a large literature on the subject and the 
book by \citet{barlow.bartholomew.ea:statistical} is a standard reference. 
As a solar penalty, the isotonicity constraint can be viewed as a mix of fused 
lasso with a nonnegativity constraint on the first differences.  The base 
representation of the penalty is
\begin{align*}
    B &= \parens[\big]{ (e_{j+1} - e_j) / \sqrt{2} \given j=1,\ldots,n-1} \\
    \Lambda &= [-\infty,0]^{n-1} \,,
\end{align*}
and the corresponding support function is the convex indicator function of the 
monotone cone,
\begin{equation*}
    h_{Z(B,\Lambda)}(\theta) 
    = 
    \begin{cases}
        0 &\text{if $\theta_{j+1} \geq \theta_j$ for all $j$,} \\
        +\infty & \text{otherwise.}
    \end{cases}
\end{equation*}
Nearly-isotonic regression \citep{tibshirani.hofling.ea:nearly-isotonic} 
relaxes the isotone constraint and can be viewed 
as an $\ell_1$ penalty on positive first differences. It is obtained by 
replacing the above intervals by ones of the form $[-\lambda, 0]$. 
Similarly to the fused lasso, the neighborhood structure can generalized from 
a chain graph to an arbitrary graph by making the obvious modification to the 
base. Though this seems to make the more sense with trees so that a partial 
order can be maintained.
\end{example}

\begin{example}[Trend filtering]
Our final example is $\ell_1$ trend filtering \citep{kim.koh.ea:1}. Like 
the fused lasso, trend filtering applies to data with an ordering structure, 
but it uses the second difference instead of the first difference. 
This encourages estimates that are piecewise linear. In the 
case of linearly ordered data, the base representation of the trend 
filtering penalty has the form, 
\begin{align*}
    B &= \parens[\big]{ (e_{j+2} - 2 e_{j+1} + e_j) / \sqrt{6} \given j=1,\ldots,n-2} \\
    \Lambda &= [-\lambda,\lambda]^{n-2}
    \,.
\end{align*}
The corresponding support function is an $\ell_1$ norm of the second 
differences:
\begin{equation*}
    h_{Z(B,\Lambda)}(\theta) 
    = \frac{\lambda}{\sqrt{6}} \sum_{j=1}^{n-2} 
        \abs{ (x_{j+2} - x_{j+1}) - (x_{j+1} - x_{j}) }
    \,.
\end{equation*}
\end{example}

We can form new solar penalties by taking the union of bases of existing 
penalties. This is effectively the same as adding the respective support 
functions. For example, the sparse fused lasso is the sum of the lasso and 
fused lasso penalties. Most importantly, the class of solar penalties is 
closed under addition with a separable support function.

\subsection{The reflection group associated with a solar penalty}
\label{sec:reflection_groups}
\begin{figure}[t]
\centering
\includegraphics[width=0.6\columnwidth]{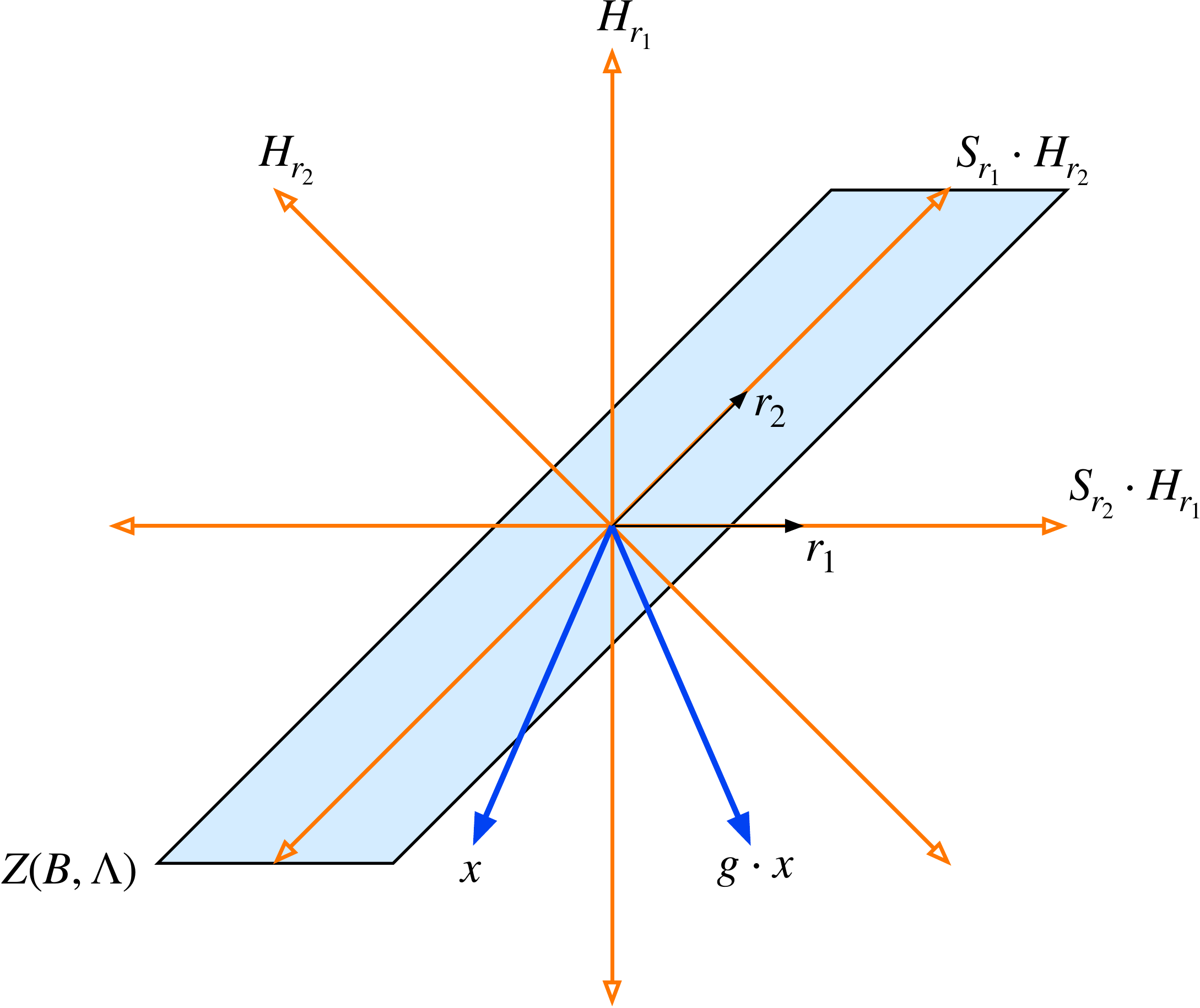}
\caption{
Reflection group $\G(B)$ associated with the solar penalty with base vectors ($r_1,r_2$).  The group is generated by two reflections, $S_{r_1}$ and 
$S_{r_2}$, across the hyperplanes $H_{r_1}$ and $H_{r_2}$.  The remaining two hyperplanes are $S_{r_1} \cdot H_{r_2}$ and $S_{r_2} \cdot H_{r_1}$. 
The four hyperplanes partition 
the space into 8 chambers (cones) that the group acts transitively on.
}
\label{fig:solar_reflection}
\end{figure}
The base vectors in the base representation of a solar penalty have a natural correspondence with hyperplanes in $\X$.  Let $r \in \X$ be a unit vector and 
\begin{equation*}
    H_r \coloneqq \set{ x \in \X \given \innerp{x}{r} = 0 } \,.
\end{equation*}
$H_r$ is the hyperplane normal to $r$. The linear transformation $S_r : \X \to \X$ defined by $S_r \cdot x = x - 2 r \innerp{r}{x}$ is called the 
\emph{reflection} across $H_r$ (or along $r$).  Note that it satisfies
\begin{align*}
    S_r \cdot r &= -r \\
    S_r \cdot H_r &= H_r \,.
\end{align*}
So it flips the sign of $r$ and leaves $H_r$ invariant. Associated to 
each solar penalty is a set of reflections $\set{S_r \given r \in B}$. 
These reflections generate a group of transformations.
\begin{definition}
The \emph{reflection group} generated by a set of base vectors $B$, 
denoted $\G(B)$, is the smallest closed subgroup of $\OrthoGroup(X)$ 
containing the set of reflections $\set{S_r \given r \in B}$.
\end{definition}
See \Cref{fig:solar_reflection} for an illustration. Note that although the 
group $\G(B)$ is finitely generated, it may be possible that $\G(B)$ is an 
infinite group.  We will see later that this is the case for trend filtering.
\Cref{tab:reflection_groups} lists the groups associated with each of the 
other example solar penalties discussed above. We will derive and discuss these in detail in \Cref{sec:applications}, but first we return to the problem of reachability of the minimum norm element.

\begin{table}[t]
\centering
\begin{tabular}{c|l|l}
    \hline
    Group & Action & Penalty \\
    \hline
    $(\Integer_2)^n$ & sign change 
    & lasso, nonnegative regression \\
    $\PermGroup_n$ & permutation 
    & fused lasso, isotonic regression \\
    $(\Integer_2)^n \rtimes \PermGroup_n$ & sign change and permutation 
    & sparse fused lasso 
\end{tabular}
\caption{Some reflection groups, their action on $\X$, and associated solar penalties.}
\label{tab:reflection_groups}
\end{table}
 
\section{Existence of a polygonal path}
\label{sec:polygonal_path}
To apply the general theory of \Cref{sec:g_minimality} to solar 
penalized estimators we need to establish the existence of a $\G$-minimal 
element in the dual feasible set $x - Z(B,\Lambda)$ for some appropriate 
group. \Cref{thm:g_minimality_and_minimal_norm} tells us that the only 
candidate for a $\G$-minimal element is the minimum norm element. 
The following theorem shows that for a solar penalty, the reflection group 
generated by its base is an appropriate choice.
\begin{theorem}
\label{thm:minimal_norm_solar}
Let $(B,\Lambda) = (\parens{r_1,\ldots,r_m}, I_1 \times \cdots \times I_m)$ 
be the base representation of a solar penalty and $x \in \X$.
If $\G \equiv \G(B)$ is the reflection group generated by $B$, then 
the minimum norm element of $x - Z(B,\Lambda)$ is $\G$-minimal in 
$x - Z(B,\Lambda)$.
\end{theorem}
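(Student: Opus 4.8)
The plan is to show directly that the minimum norm element $y_\star \coloneqq \argmin_{y \in K}\norm{y}$ of the dual feasible set $K \coloneqq x - Z(B,\Lambda)$ satisfies $y_\star \preceq_\G z$ for every $z \in K$; by \Cref{thm:g_minimality_and_minimal_norm} (equivalently \Cref{lem:g_minimality_criteria}) this is exactly the assertion of the theorem. To reach a fixed $z \in K$, I would build a \emph{polygonal path} inside $K$ running from $z$ to $y_\star$, each of whose vertices is $\G$-majorized by its predecessor. Given such a path $z = y^{(0)}, y^{(1)}, y^{(2)}, \dots \to y_\star$ with $y^{(k+1)} \preceq_\G y^{(k)}$, transitivity of $\preceq_\G$ gives $y^{(k)} \in \conv(\G\cdot z)$ for all $k$; since $\G = \G(B)$ is a closed subgroup of the compact group $\OrthoGroup(\X)$, the orbitope $\conv(\G\cdot z)$ is compact, hence closed, so $y_\star = \lim_k y^{(k)} \in \conv(\G\cdot z)$, i.e. $y_\star \preceq_\G z$. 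As $z$ is arbitrary, $y_\star$ is $\G$-minimal in $K$.

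The path will be the trajectory of cyclic coordinate descent on the equivalent box-constrained quadratic. Writing a point of $K$ as $y = x - B\lambda$ with $\lambda \in \Lambda = I_1\times\cdots\times I_m$ (such a representation of $z$ exists precisely because $z \in K$), minimizing $\tfrac12\norm{y}^2$ over $K$ is the same as minimizing $g(\lambda) \coloneqq \tfrac12\norm{x - B\lambda}^2$ over the box $\Lambda$; because $\norm{r_j}=1$, the restriction of $g$ to coordinate $j$ is a parabola with leading coefficient $\tfrac12$, so each coordinate subproblem has a unique minimizer even when $I_j$ is unbounded. The key observation --- ``reflect and average'' --- is that an exact coordinate step is a convex combination of the current iterate with its reflection: at iterate $y$, the unconstrained displacement in coordinate $j$ is $t^\star = -\innerp{y}{r_j}$, and $y + t^\star r_j = \tfrac12\bigl(y + S_{r_j}\cdot y\bigr)$; since $0$ lies in the feasible displacement interval and $g$ restricted to that coordinate is convex, the \emph{constrained} minimizer is $y + \alpha t^\star r_j$ for some $\alpha \in [0,1]$, which equals $(1-\tfrac{\alpha}{2})\,y + \tfrac{\alpha}{2}\,(S_{r_j}\cdot y)$. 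As $S_{r_j}\in\G$, this point lies in $\conv(\G\cdot y)$, so every coordinate step produces a $\G$-majorized successor, and since the updated $\lambda$ stays in $\Lambda$, the successor stays in $K$. The polygonal path is the sequence of these iterates.

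It then remains to show the iterates converge to $y_\star$. The norms $\norm{y^{(k)}}$ are non-increasing, so the iterates remain in a compact subset of $K$; a short computation with the formula above yields the sufficient-decrease bound $\tfrac12\norm{y^{(k)}}^2 - \tfrac12\norm{y^{(k+1)}}^2 \ge \tfrac12\norm{y^{(k+1)} - y^{(k)}}^2$, so $\sum_k\norm{y^{(k+1)} - y^{(k)}}^2 \le \norm{z}^2 < \infty$ and consecutive iterates get arbitrarily close. Then the standard argument for cyclic coordinate descent applies: every limit point $\bar y = x - B\bar\lambda$ of the bounded sequence is coordinate-wise stationary (using continuity of the one-dimensional constrained argmin in the remaining coordinates), and since $\Lambda$ is a product of intervals and $g$ is convex, coordinate-wise stationarity of $\bar\lambda$ is equivalent to $\innerp{\nabla g(\bar\lambda)}{\mu - \bar\lambda} \ge 0$ for all $\mu \in \Lambda$, i.e. to global optimality. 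Hence every limit point equals the unique minimum norm element $y_\star$, so $y^{(k)}\to y_\star$.

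I expect the reflect-and-average identity and the reduction to a closed orbitope to be the conceptually easy part. The main obstacle is the convergence analysis, where one must genuinely exploit the quadratic structure --- cyclic coordinate descent can fail to converge for general convex objectives --- and stay careful about two features of solar penalties: $B$ may have linearly dependent columns, so $g$ is merely convex (not strictly convex) in $\lambda$ and one must track the image $y = x - B\lambda$ rather than $\lambda$ itself; and some $I_j$ may be unbounded (the isotonic and nonnegative-cone cases), which is harmless only because a parabola with positive leading coefficient still attains its minimum over a half-line.
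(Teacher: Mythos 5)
Your proposal is correct and follows essentially the same route as the paper: cyclic coordinate descent on the dual minimum-norm problem, the observation that each exact coordinate step is a convex combination of the iterate with its reflection $S_{r_j}\cdot y$ (the paper phrases this via an isosceles-triangle argument, you via the explicit identity $y - \innerp{y}{r_j}r_j = \tfrac12(y + S_{r_j}\cdot y)$ plus projection onto the feasible interval), and compactness of the orbitope to pass to the limit. The only substantive difference is that you re-derive convergence of the iterates to $y_\star$ from scratch via a sufficient-decrease bound, whereas the paper simply invokes Tseng's Theorem~4.1; both are legitimate.
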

The proof of \Cref{thm:minimal_norm_solar} is similar in spirit to the path 
result of \citet[page 47]{hardy.littlewood.ea:inequalities} \citep[see][Lemma B.1 in Chapter 2]{marshall.olkin.ea:inequalities} and its generalization 
by \citet{eaton.perlman:reflection}.  Let $y_*$ be the minimum norm element 
of $Z(B,\Lambda)$ and $y_0 \in x - Z(B, \Lambda)$. We will construct a 
polygonal path from $y_0$ to $y_*$ where each segment along the path is 
obtained by averaging elementary transformations of the iterates. 
Our approach to constructing this path is algorithmic and was inspired by 
the boundary lemma proof of \citet{tibshirani.taylor:solution}. 
Consider applying cyclic coordinate descent to the minimum norm problem:
\begin{equation}
  \label{eq:minimum_norm_coordinate}
  \begin{aligned}
      &\text{minimize}
        &   &
        \norm*{x - {\textstyle \sum_{j=1}^m} \alpha_j r_j}
      \\
      &\text{subject to}
        &   & \alpha_j \in I_j\,, \quad j = 1,\ldots,m
        \,.
  \end{aligned}
\end{equation}
Rather than tracking the iterates in terms of $(\alpha_1,\ldots,\alpha_m)$,
we track them in terms of the \emph{fitted values}, 
\begin{equation*}
  y_t = x - \sum_j \alpha_j^{(t)} r_j \,, \quad t=0,1,2, \ldots \,.
\end{equation*}
A coordinate descent  update modifies $y_t$ along one coordinate, say $j$, 
so that 
\begin{equation*}
  y_{t+1} - y_t 
  = \braces[\Big]{ \alpha_j^{(t+1)} - \alpha_j^{(t)} } r_j
  \in \vecspan (r_j) \,.
\end{equation*}
So the selection of coordinate can instead be viewed as the selection of a 
base vector $r \in B$, and the update is find a minimum norm element along 
a line segment parallel to $r$. This yields the following geometric 
description of coordinate descent applied to \Cref{eq:minimum_norm_coordinate}.
\begin{enumerate}
  \item Select the coordinate to be updated: $r \in B$.
  \item Let $y_{t+1}$ be the minimum norm element in 
  $(y_t + \vecspan(r)) \cap (x - Z(B,\Lambda))$.
\end{enumerate}
\begin{figure}[t]
\centering
\includegraphics[width=0.9\columnwidth]{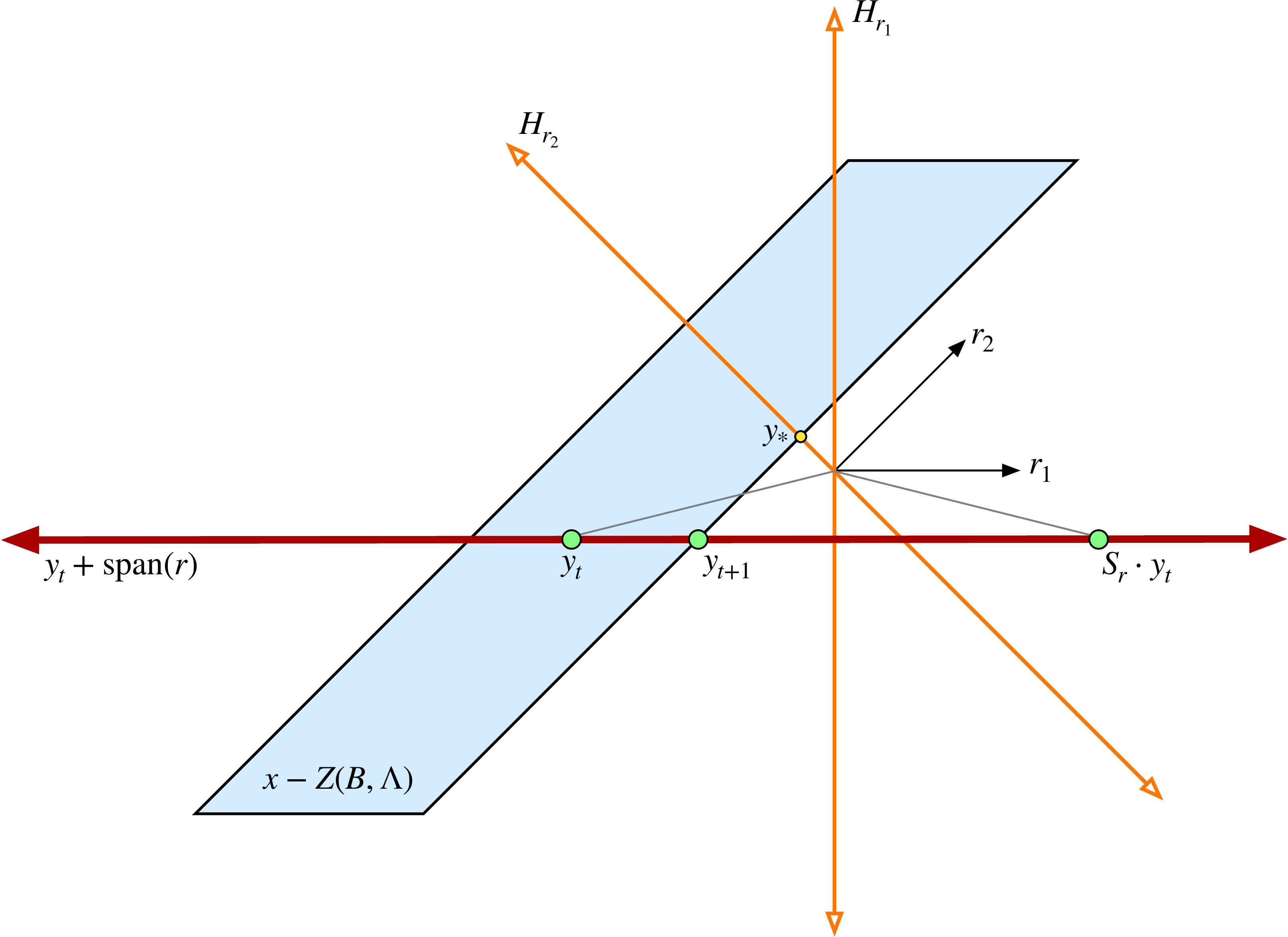}
\caption{
Coordinate descent update with $r = r_1$.  The update moves $y_t$ to 
$y_{t+1}$ on a line parallel to $r$. 
The minimum norm point in $(y_t + \vecspan(r)) \cap (x - Z(B,\Lambda)$ is 
obtained by averaging $y_t$ with its reflection $S_r \cdot y_t$ across the 
hyperplane $H_r$.
}
\label{fig:solar_coordinate_descent}
\end{figure}
\Cref{fig:solar_coordinate_descent} illustrates the update, and it provides a 
visual explanation for the following lemma which we will use to prove 
\Cref{thm:minimal_norm_solar}.
\begin{lemma}
\label{lem:ccd_g_monotone}
Let $y_0 \in x - Z(B, \Lambda)$ and $y_t$, $t=1,2,\ldots$, be the iterates of 
cyclic coordinate descent applied to the minimum norm problem 
\Cref{eq:minimum_norm_coordinate}. The iterates form a $\G$-monotone decreasing sequence with $\G \equiv G(B)$, 
\begin{equation*}
  y_t \succeq_\G y_{t+1} \quad \text{for all} \quad t\,,
\end{equation*}
and $y_t \to y_*$, the minimum norm element of $x - Z(B, \Lambda)$, 
as $t \to \infty$.
\end{lemma}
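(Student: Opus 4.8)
The plan is to establish the two claims separately: first that each coordinate descent update produces a $\G$-monotone decrease, and then that the iterates converge to the minimum norm element $y_*$. For the $\G$-monotonicity, I would analyze a single update. Fix an iterate $y_t \in x - Z(B,\Lambda)$ and suppose we update along base vector $r \in B$. The feasible line segment $(y_t + \vecspan(r)) \cap (x - Z(B,\Lambda))$ is a (possibly unbounded) interval in the direction $r$, and the minimum norm point on it is the orthogonal projection of the origin onto that segment, clipped to the segment's endpoints. The key geometric fact, visible in \Cref{fig:solar_coordinate_descent}, is that this segment is symmetric under the reflection $S_r$ across $H_r$: indeed, $x - Z(B,\Lambda)$ need not itself be $S_r$-symmetric, but the intersection with the line through $y_t$ parallel to $r$ \emph{is}, because moving $\alpha_j$ within $I_j$ while holding the other coordinates fixed traces out a segment whose midpoint lies on... — more carefully, I would argue that $y_{t+1}$ is a convex combination of $y_t$ and $S_r \cdot y_t$: either the unconstrained minimizer (the foot of the perpendicular from the origin, which is exactly the midpoint $\tfrac{1}{2}(y_t + S_r \cdot y_t)$ since $S_r \cdot y_t$ is the reflection of $y_t$ across $H_r$ and the line is perpendicular to $H_r$) is feasible, giving $y_{t+1} = \tfrac12(y_t + S_r\cdot y_t)$, or a constraint is active and $y_{t+1} = y_t$, or $y_{t+1}$ lies strictly between. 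In every case $y_{t+1} = (1-\mu) y_t + \mu\, S_r \cdot y_t$ for some $\mu \in [0,\tfrac12]$, hence $y_{t+1} \in \conv\{y_t, S_r \cdot y_t\} \subseteq \conv(\G \cdot y_t)$, so $y_{t+1} \preceq_\G y_t$ by the definition of $\G$-majorization and \Cref{lem:g_majorization_equivalence}. Chaining over $t$ gives the $\G$-monotone decreasing sequence.

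For convergence, I would invoke standard results on cyclic coordinate descent: the objective $\theta \mapsto \norm{x - B\theta}$ (equivalently its square) is convex, the feasible set $\Lambda = I_1 \times \cdots \times I_m$ is a box (a Cartesian product of intervals), and the nonsmooth/constraint structure is separable across coordinates, so block coordinate descent on a convex objective with this product-structured feasible set converges to a minimizer — this is the classical setting where coordinate descent is guaranteed to work (e.g., Tseng-type convergence, or directly since the squared-norm objective is a smooth convex function and the constraints are separable). Thus $\norm{y_t} \to \min\{\norm{y} : y \in x - Z(B,\Lambda)\}$. Since the minimum norm element of the closed convex set $x - Z(B,\Lambda)$ is unique (it is the projection of the origin), and since $\{y_t\}$ is bounded (norms are nonincreasing after the first step) with every convergent subsequence's limit being feasible with minimal norm, we get $y_t \to y_*$. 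Finally, by \Cref{lem:g_minimal_equivalance} together with $\G$-monotonicity, $y_* \preceq_\G y_t \preceq_\G y_0$, and one should note $y_*$ is the candidate from \Cref{thm:g_minimality_and_minimal_norm}; the full $\G$-minimality of $y_*$ in $x - Z(B,\Lambda)$ (over \emph{all} feasible points, not just the iterates) is what \Cref{thm:minimal_norm_solar} asserts and is not claimed by this lemma per se.

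The main obstacle I anticipate is the single-step geometric claim that $y_{t+1}$ is a convex combination of $y_t$ and its reflection $S_r \cdot y_t$ — specifically verifying that the minimum-norm point of a constrained segment parallel to $r$ always lands in $[\,y_t,\ \tfrac12(y_t + S_r\cdot y_t)\,]$ rather than overshooting past the midpoint. This requires checking that if the unconstrained optimum (the midpoint) is infeasible, then the active constraint lies on the $y_t$ side of the midpoint, so that clipping moves us back toward $y_t$, never past the perpendicular foot. One has to confirm that $y_t$ being feasible forces the feasible interval to contain $y_t$ and that the projection of the origin onto the containing line is the reflection midpoint; then convexity of norm on the interval does the rest. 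A secondary subtlety is handling unbounded intervals $I_j$ (rays), where the feasible segment may be a half-line — but the minimum-norm point on a half-line still exists and the same convex-combination bound holds, so this is a routine case check rather than a genuine difficulty. I would also need to ensure the "boundedness of iterates" argument is airtight when $Z(B,\Lambda)$ is unbounded: since $\norm{y_{t+1}} \le \norm{y_t}$, the whole sequence after $t=0$ lies in a fixed ball, so compactness still applies.
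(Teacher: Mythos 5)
Your proposal is correct and follows essentially the same route as the paper: the single coordinate update is shown to land in $\conv\{y_t, S_r \cdot y_t\}$ (the paper gets this from $\norm{y_{t+1}} \leq \norm{y_t} = \norm{S_r \cdot y_t}$ and the isoceles-triangle picture, while you get the slightly sharper statement that $y_{t+1}$ lies between $y_t$ and the midpoint by projecting the origin onto the line and clipping to the feasible interval), and convergence is handled by the same Tseng-type result for cyclic coordinate descent with a convex objective, separable box constraints, and a compact lower level set. Your closing remark correctly separates what this lemma claims from the full $\G$-minimality over all of $x - Z(B,\Lambda)$, which the paper likewise defers to Theorem~\ref{thm:minimal_norm_solar}.
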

\begin{proof}[Proof of \Cref{lem:ccd_g_monotone}]
\citet[Theorem 4.1]{tseng:convergence} 
establishes the convergence of $y_t$ to $y_*$, because the constraints of 
\Cref{eq:minimum_norm_coordinate} are separable, the objective function is 
convex, and the lower level set 
$\set{y \in \X \given \norm{y} \leq \norm{y_0}}$ is compact. 
Moving on to the remaining claim, 
note that $y_t$, $S_r \cdot y_t$, and $y_{t+1}$ all lie on the line 
$y_t + \vecspan(r)$. Since 
\begin{equation*}
  \norm{y_{t+1}} \leq \norm{y_t} = \norm{S_r \cdot y_t} \,,
\end{equation*}
the line segments $[0, y_t]$, $[0, S_r \cdot y_t]$ form the legs of an 
isoceles triangle with $y_{t+1}$ in the base of the triangle. It then follows that 
\begin{equation*}
  y_{t+1}  \in  \conv\set{y_t, S_r \cdot y_t} 
\end{equation*}
and $y_{t+1} \preceq_\G y_t$ as desired.
\end{proof}
An interesting feature of this proof is that in the case of the permutation group, it shows that the coordinate descent update is an elementary Robin Hood operation \citep{arnold.sarabia:majorization}. More generally, the geometry 
of the coordinate descent updates is described by the reflection group. The 
iterates progress towards the minimum norm element by sequentially reflecting 
and averaging along each base vector, so the iterates must lie in 
$\conv(\G \cdot y_0)$.
\begin{proof}[Proof of \Cref{thm:minimal_norm_solar}]
By induction, $y_t \in \conv(G \cdot y_0)$. Since the orbitope is compact, 
$y_* \in \conv(G \cdot y_0)$ and hence $y_* \preceq_\G y_0$ for all 
$y_0 \in x - Z(B, \Lambda)$.
\end{proof}
 
\section{Consequences}
\label{sec:applications}
The immediate consequence of \Cref{thm:minimal_norm_solar} is that the 
minimal norm element of the dual feasible set for the solar penalty is 
$\G$-minimal.  Combining with \Cref{thm:least_squares_minimal} 
yields the main result of the paper:
\begin{theorem}
\label{thm:solar_minimality}
Let $h_C$ be a solar penalty with base representation $(B,\Lambda)$ 
and $\G \equiv \G(B)$ be the reflection group generated by $B$.
If $\mathcal{M}$ is a collection of expofam-type estimators with 
solar penalty $h_C$ and $\G$-invariant generators 
$\phi \in \Gamma_0(\X)$ satisfying \Cref{assumption:strong_duality}, then 
the penalized least squares estimator in $\mathcal{M}$ is computationally 
minimal for $\mathcal{M}$.
\end{theorem}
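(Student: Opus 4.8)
The plan is to obtain the statement as the composition of the two preceding results: \Cref{thm:minimal_norm_solar}, which provides a $\G(B)$-minimal element of the dual feasible set for a solar penalty, and \Cref{thm:least_squares_minimal}, which converts the existence of such an element into computational minimality of the penalized least squares estimator. All of the genuine work lives in \Cref{thm:minimal_norm_solar} --- the reflect-and-average coordinate descent argument of \Cref{sec:polygonal_path} --- so the task here is essentially to check that the hypotheses line up.

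First I would confirm that a solar penalty $h_C$ with base representation $(B,\Lambda) = (\parens{r_1,\ldots,r_m}, I_1 \times \cdots \times I_m)$ genuinely fits the expofam-type framework, i.e.\ that we may take the penalty support set to be $C \coloneqq Z(B,\Lambda) = B \cdot \Lambda$. This set is nonempty because each $I_j$ is a nonempty closed interval, convex because it is the linear image $B \cdot \Lambda$ of a convex set, and closed because writing each $I_j$ as a compact interval plus one of $\set{0}$, $[0,\infty)$, $(-\infty,0]$, or $\Real$ exhibits $Z(B,\Lambda)$ as the Minkowski sum of a compact zonotope and a polyhedral cone, and the sum of a compact set with a closed set is closed. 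Hence $C$ is a bona fide penalty support set, $h_C = h_{Z(B,\Lambda)}$, and every $T \in \mathcal{M}$ has the expofam-type form with penalty support set $C$.

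Now set $\G \equiv \G(B)$. By \Cref{thm:minimal_norm_solar}, for every $x \in \X$ the dual feasible set $x - C = x - Z(B,\Lambda)$ contains a $\G$-minimal element, namely its (well-defined, since $x-C$ is nonempty closed convex) minimum norm element; this is exactly the standing hypothesis of \Cref{thm:least_squares_minimal}. The remaining hypotheses of that theorem hold verbatim from the assumptions: $\mathcal{M}$ is a collection of expofam-type estimators all sharing the penalty support set $C$, with $\G$-invariant generators $\phi \in \Gamma_0(\X)$, and satisfying \Cref{assumption:strong_duality}. (The penalized least squares estimator itself has generator $\phi = \tfrac{1}{2}\norm{\cdot}^2$, which is $\G$-invariant since $\G \subseteq \OrthoGroup(\X)$ and for which strong duality is immediate, so it is a member of $\mathcal{M}$.) Invoking \Cref{thm:least_squares_minimal} then shows that
\begin{equation*}
    U(x) = \argmin_{y \in x - C} \norm{y} = \argmin_\theta \frac{1}{2}\norm{x - \theta}^2 + h_C(\theta)
\end{equation*}
is computationally sufficient for $\mathcal{M}$, lies in $\mathcal{M}$, and is computationally minimal for $\mathcal{M}$; since $U$ is precisely the penalized least squares estimator with penalty $h_C$, this is the claimed conclusion. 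I do not expect a real obstacle here: the theorem is a corollary, and the only step that is not automatic from earlier results is the closedness verification above, which is needed because a solar penalty's support set may be unbounded.
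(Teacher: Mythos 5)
Your proposal is correct and follows essentially the same route as the paper, which likewise obtains \Cref{thm:solar_minimality} by feeding the $\G(B)$-minimal element guaranteed by \Cref{thm:minimal_norm_solar} into the hypotheses of \Cref{thm:least_squares_minimal}. The only addition is your explicit verification that $Z(B,\Lambda)$ is a nonempty closed convex set; the paper takes this for granted, and your argument for it is sound.
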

In the remainder of this section we will apply 
\Cref{thm:solar_minimality} in more detail 
to different families of estimators by taking the following steps.
\begin{enumerate}
    \item Fix a solar penalty with base representation $(B,\Lambda)$.
    \item Let $\G = \G(B)$ be the reflection group generated by $B$.
    \item Let $\mathcal{M}$ be a collection of expofam-estimators with 
    $\G$-invariant generators satisfying \Cref{assumption:strong_duality}.
\end{enumerate}
We will follow this program for each of the example solar penalties in \Cref{sec:solar_penalties} with $\X = \Real^n$.  The main challenge is 
identifying the reflection group.

\subsection{Lasso and nonnegative regression}
\label{sec:lasso}
The base vectors of the $\ell_1$ norm are 
\begin{equation*}
    B = \parens{e_1,\ldots,e_n} \,.
\end{equation*}
So the action of $S_{e_j}$ on $\Real^n$ is simply to change 
the sign of the $j$th coordinate. Thus, $\G$ is isomorphic to the $n$-fold 
direct product of the cyclic group of order 2, $(\Integer_2)^n$.
It then follows from \Cref{thm:solar_minimality} that 
\begin{equation*}
    U(x) = \argmin_\theta \frac{1}{2} \norm{x - \theta}^2 
         + \lambda \norm{\theta}_1
\end{equation*}
is computationally minimal for all $\ell_1$-penalized expofam-type estimators 
with generators $\phi$ that depend only the magnitude of the coordinates of 
$\theta$.
Since the nonnegative regression penalty has the same base as the $\ell_1$ norm, the same argument as above shows that
\begin{equation*}
    U(x) = \argmin_{\theta \geq 0} \frac{1}{2} \norm{x - \theta}^2 
\end{equation*}
is computationally minimal for all nonnegative constrained expofam-type estimators with generators $\phi$ that depend only the magnitude of the coordinates of $\theta$. These two cases recover Examples 5 and 7 of \citet{vu:group}.

\subsection{Fused lasso/total variation and isotonic regression}
\label{sec:fused_lasso}
The base vectors of the fused lasso penalty and 
isotonic regression are 
\begin{equation*}
    B = \parens{ (e_{j+1} - e_j) / \sqrt{2} \given j = 1, \ldots, n-1 } 
    \,.
\end{equation*}
As a generator of a reflection group, this set is known as a fundamental 
system for the root system of the permutation group $\PermGroup_n$ 
\citep[36]{kane:reflection}. Another way to see this is to write out the 
reflection
\begin{equation*}
    S_{e_{j+1} - e_{j}} \,.
\end{equation*}
As a matrix it is equal to the identity everywhere except in the 
2-by-2 submatrix for rows and columns $(j,j+1)$ where it has the form
\begin{equation*}
    \begin{bmatrix}
        0 & 1 \\
        1 & 0
    \end{bmatrix} \,.
\end{equation*}
So the action of $S_j$ on $\Real^n$ is the transposition $(j, j+1)$. It is 
well-known that these transpositions generate all permutations in 
$\PermGroup_n$. Then it follows from 
\Cref{thm:solar_minimality} that 
the least squares fused lasso (resp. isotonic regression) 
is computationally minimal for all total variation penalized (resp. isotonic 
regression) expofam-type estimators with permutation invariant generators.

\paragraph{Comparison with existing results}
An instance of this phenomenon has already been pointed out in the 
literature.  \citet{barlow.brunk:isotonic} considered 
generalized isotonic regression estimators of the form
\begin{equation}
  \label{eq:generalized_isotonic}
  \begin{aligned}
      &\text{minimize}
        &   &
        \sum_{j=1}^n [f(\theta_j) - x_j \theta_j] w_j
      \\
      &\text{subject to}
        &   & \theta_1 \leq \theta_2 \leq \cdots \leq \theta_n
        \,,
  \end{aligned}
\end{equation}
where $f$ is a proper convex function on $\Real$ and $w_j > 0$ are fixed weights. They showed that the solution to this problem could be obtained 
from the least squares solution with $f(u) = u^2/2$ \citep[Theorem 3.1]{barlow.brunk:isotonic}, i.e. that least squares is computationally minimal 
for generalized isotonic regression. There are two key differences with 
what we derived just now.  \Cref{eq:generalized_isotonic} is an expofam-type 
estimator with generator of the form 
\begin{equation*}
    \phi(\theta) = \sum_j f(\theta_j) w_j \,.
\end{equation*}
This is a \emph{separable} function and not necessarily permutation invariant 
unless the weights are constant. In constant weights case, this generator 
is permutation invariant, however our result allows possibly nonseparable 
permutation invariant functions.

\citet{dumbgen.kovac:extensions} studied generalizations of total variation 
denoising solving problems of the form 
\begin{equation}
  \begin{aligned}
      &\text{minimize}
        &   &
        \sum_{j=1}^n f_j(\theta_j) - x_j \theta_j 
        + \lambda \sum_{j=1}^{n-1} \abs{\theta_{j+1} - \theta_j}
        \,.
  \end{aligned}
\end{equation}
In the case where $f = f_j$, $j=1,\ldots,n$, is the log-partition of 
a one-dimensional exponential family they showed that the solution to 
\Cref{eq:generalized_tautstring} could be obtained from the least squares fit.
This is a special case of our result with 
$\phi(\theta) = f(\theta_1) + \cdots f(\theta_n)$, 
which is clearly a permutation invariant function.
Our result, however, does not require that the generator be separable.

\paragraph{Taut strings}
There is a well-known connection between 1d total variation denoising and 
taut strings \citep{mammen.:locally,davies.kovac:local}. 
Let $D_n : \Real^n \to \Real^{n-1}$ denote the first-difference operator 
on $\Real^n$, i.e. 
\begin{equation*}
    [D_n x]_i = x_{i+1} - x_i\,, \quad i = 1,\ldots,n-1\,.
\end{equation*}
With some algebra, we can express the dual feasible set as
\begin{equation}
    \label{eq:tautstring_transform}
    x - Z(B, \Lambda)
    =
    D_{n+1} \Sigma
    \,,
\end{equation}
where 
\begin{equation*}
    \Sigma = 
    \set*{z \in \Real^{n+1} 
    \given z_1 = w_1, z_{n+1} = w_{n+1}, \norm{z - w}_\infty \leq \lambda}
\end{equation*}
and $w \in \Real^{n+1}$ is the cumulative sum of $x$:
\begin{equation*}
    w_i = \sum_{j < i} x_j \,.
\end{equation*}
$\Sigma$ can be interpreted as a set of \emph{strings} constrained to a 
tube of radius $\lambda$ centered at $y$.  The end points of the string are 
fixed. The taut string problem is to find the string in $\Sigma$ of minimal 
length so that it is made taut: 
\begin{equation}
  \label{eq:generalized_tautstring}
  \begin{aligned}
      &\text{minimize}
        &   &
        \sum_j \sqrt{1 + (z_{j+1} - z_j)^2}
      \\
      &\text{subject to}
        &   & z \in \Sigma
        \,.
  \end{aligned}
\end{equation}
Using the identity \Cref{eq:tautstring_transform}, the taut string problem 
\Cref{eq:generalized_tautstring} can be written as
\begin{equation*}
    \min_{y \in x - Z(B,\Lambda)} \phi^*(y) \,,
\end{equation*}
where
\begin{equation*}
    \phi^*(y) = \sum_j \sqrt{1 + y_j^2} 
\end{equation*}
is permutation invariant convex function.  Therefore, by \Cref{lem:g_minimal_equivalance} and \Cref{thm:minimal_norm_solar},
the taut string solution 
is given by the $\G$-minimal element of $x - Z(B,\Lambda)$ which is the same as least squares solution:
\begin{equation*}
  \begin{aligned}
      &\text{minimize}
        &   &
        \sum_j (z_{j+1} - z_j)^2
      \\
      &\text{subject to}
        &   & z \in \Sigma
        \,.
  \end{aligned}
\end{equation*}
Additional insight can be gained by identifying the $\G$-majorization 
ordering. Since $\G$ is the permutation group, it is exactly the 
classical majorization ordering. So the taut string solution is the 
string with least majorized first difference.

\paragraph{Graph-guided versions}
We conclude this example by pointing out that similar results hold for the graph fused lasso and graph isotonic regression.
\begin{proposition}
\label{pro:graph_fused_permutation}
If $\mathcal{E}$ is edge set of a connected undirected graph on 
$\bracks{n}$ and
\begin{equation*}
    B = \parens[\big]{ (e_j - e_i) / \sqrt{2} 
        \given \set{i,j} \in \mathcal{E}}
    \,,
\end{equation*}
then $\G(B) = \PermGroup_n$.
\end{proposition}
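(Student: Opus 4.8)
The plan is to reduce the claim to the classical combinatorial fact that the transpositions indexed by the edges of a connected graph generate the full symmetric group. The first step is to describe the generating reflections explicitly. For an edge $\{i,j\} \in \mathcal{E}$ put $r = (e_j - e_i)/\sqrt{2}$; the choice of which endpoint is $i$ and which is $j$ does not matter because $S_r = S_{-r}$. Repeating the $2\times 2$ submatrix computation used above for the chain graph shows that $S_r \cdot x = x - (e_j - e_i)(x_j - x_i)$, i.e.\ $S_r$ exchanges the $i$th and $j$th coordinates of $x$ and fixes the rest, so $S_r$ is the permutation matrix of the transposition $(i\,j)$. Hence $\G(B)$ is precisely the subgroup of the permutation group $\PermGroup_n$ generated by $\{(i\,j) : \{i,j\} \in \mathcal{E}\}$; in particular $\G(B) \subseteq \PermGroup_n$, and since $\PermGroup_n$ is finite there is no distinction between the ``smallest closed subgroup'' and the abstract subgroup generated, so the topological part of the definition is vacuous here. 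It remains to prove $\PermGroup_n \subseteq \G(B)$.

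Next I would pass to a spanning tree. Because the graph is connected it contains a spanning tree $T \subseteq \mathcal{E}$, and since enlarging the generating set of transpositions can only enlarge the subgroup while keeping it inside $\PermGroup_n$, it suffices to show that the transpositions indexed by the edges of $T$ already generate $\PermGroup_n$. I would then show that \emph{every} transposition $(a\,b)$ with $a \neq b$ lies in the group $G$ generated by the tree edges, by induction on the tree distance $d_T(a,b)$. If $d_T(a,b) = 1$ then $\{a,b\}$ is an edge of $T$ and $(a\,b)$ is a generator. If $d_T(a,b) = d > 1$, let $c$ be the neighbour of $b$ on the unique $T$-path from $a$ to $b$; then $\{b,c\}$ is a tree edge, $d_T(a,c) = d-1$, and $c \notin \{a,b\}$, so by the induction hypothesis $(a\,c) \in G$ and the conjugation identity
\begin{equation*}
    (a\,b) = (b\,c)\,(a\,c)\,(b\,c)^{-1}
\end{equation*}
gives $(a\,b) \in G$. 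Since transpositions generate $\PermGroup_n$, this yields $G = \PermGroup_n$ and therefore $\G(B) = \PermGroup_n$.

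I do not expect a serious obstacle here: the result is classical and the argument is short. The places that need a moment's care are the bookkeeping in the conjugation step --- checking that $c$ is distinct from both $a$ and $b$ so that conjugating $(a\,c)$ by $(b\,c)$ really produces $(a\,b)$ --- and the observation that closedness of $\G(B)$ is automatic because $\PermGroup_n$ is finite. If one prefers to avoid the induction, the same conclusion follows from the standard structure lemma that a subgroup of $\PermGroup_n$ generated by a set of transpositions is the direct product of the symmetric groups on the connected components of the associated ``transposition graph''; applied to the connected graph $(\bracks{n}, \mathcal{E})$ this immediately gives $\G(B) = \PermGroup_n$ and also explains why connectedness is exactly the right hypothesis.
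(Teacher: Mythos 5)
Your proof is correct and takes essentially the same route as the paper: identify each reflection $S_{(e_j-e_i)/\sqrt 2}$ as the transposition $(i\,j)$ and then invoke connectivity to show that the edge transpositions generate all of $\PermGroup_n$. Your spanning-tree induction via the conjugation identity $(a\,b)=(b\,c)(a\,c)(b\,c)^{-1}$ is in fact the more careful rendering of the path step (the paper's literal ``product of the edge transpositions in order'' along a path is a cycle rather than the transposition $(u\,v)$, so conjugation is the right way to make that step precise), and your remark that closedness is vacuous for a finite group is a sensible addition.
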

See \Cref{sec:proofs} for the proof. A remarkable consequence of this 
observation is that the computational minimality phenomenon holds for the 
graph-guided fused lasso for as well.
\begin{corollary}
\label{cor:computational_minimality_fused_lasso}
Within the class of expofam-type estimators with fused lasso penalty 
(resp. isotonic regression) on a 
connected graph and permutation invariant generators $\phi \in \Gamma_0(\X)$, 
the least squares estimator is computationally minimal.
\end{corollary}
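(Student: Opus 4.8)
The plan is to obtain the corollary as a direct specialization of \Cref{thm:solar_minimality} once the reflection group has been pinned down by \Cref{pro:graph_fused_permutation}. First I would record that the graph fused lasso penalty on an edge set $\mathcal{E}$ is exactly the solar penalty $h_{Z(B,\Lambda)}$ with $B = \parens{(e_j - e_i)/\sqrt 2 \given \set{i,j}\in\mathcal{E}}$ and $\Lambda = [-\lambda,\lambda]^{\card{\mathcal{E}}}$, as already computed in the fused lasso example, while graph isotonic regression is the solar penalty with the \emph{same} base $B$ but with $\Lambda = [-\infty,0]^{\card{\mathcal{E}}}$. Since the reflection group $\G(B)$ depends only on the base vectors --- a reflection satisfies $S_r = S_{-r}$, so edge orientation is irrelevant, and $\Lambda$ plays no role in the definition of $\G(B)$ --- a single appeal to \Cref{pro:graph_fused_permutation} gives $\G(B) = \PermGroup_n$ in both cases. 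This is precisely where connectedness of the graph enters: a disconnected graph would yield only the product of the symmetric groups on the connected components.

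Next I would identify the relevant invariance class. A generator $\phi \in \Gamma_0(\Real^n)$ is $\G(B)$-invariant if and only if it is invariant under every coordinate permutation, i.e.\ permutation invariant, because $\G(B) = \PermGroup_n$. Hence the collection $\mathcal{M}$ of expofam-type estimators with the given graph fused lasso (resp.\ isotonic) penalty and permutation invariant generators --- understood, as in \Cref{thm:solar_minimality}, to satisfy \Cref{assumption:strong_duality} --- is exactly a collection of expofam-type estimators with solar penalty $h_{Z(B,\Lambda)}$ and $\G(B)$-invariant generators. Applying \Cref{thm:solar_minimality} to this $\mathcal{M}$ then immediately gives that its penalized least squares member, namely the least squares graph fused lasso (resp.\ least squares graph isotonic regression), is computationally minimal for $\mathcal{M}$.

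The deductive content here is just the chaining of two previously established results, so I do not expect a genuine obstacle: the substantive work lives entirely in \Cref{pro:graph_fused_permutation} (identifying the group) and in \Cref{thm:solar_minimality} (which in turn rests on \Cref{thm:minimal_norm_solar} and \Cref{thm:least_squares_minimal}). The only points requiring a moment's care are the bookkeeping observations that the base $B$ is literally the same in the fused lasso and isotonic cases, so one proposition covers both, and that $\G(B)$ is insensitive to the interval data $\Lambda$ and to the choice of edge orientation --- both routine, but worth stating explicitly.
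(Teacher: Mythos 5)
Your proposal is correct and follows exactly the route the paper intends: \Cref{cor:computational_minimality_fused_lasso} is presented as an immediate consequence of combining \Cref{pro:graph_fused_permutation} (which gives $\G(B) = \PermGroup_n$ for a connected graph) with \Cref{thm:solar_minimality}. Your additional bookkeeping remarks --- that the base $B$ is identical for the fused lasso and isotonic cases, and that $\G(B)$ is insensitive to $\Lambda$ and to edge orientation since $S_r = S_{-r}$ --- are correct and make explicit what the paper leaves implicit.
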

If the graph is not connected, then it is not hard to see that the resulting 
reflection group is the direct product of permutation groups on nodes of each 
of the connected components.  In other words, it is the subgroup of 
permutation generated by transpositions within each connected component. So 
the generators in the above corollary must invariant under permutations within those associated coordinates.

\subsection{Sparse fused lasso}
The sparse fused lasso penalty has the form
\begin{equation*}
  \lambda_1 \sum_j \abs{\theta_j} 
  + \lambda_2 \sum_j \abs{\theta_{j+1} - \theta_j}\,.
\end{equation*}
It encourages both sparsity and smoothness in estimates of $\theta$.
The base representation of this penalty has base vectors from the lasso 
and fused lasso penalties:
\begin{equation*}
  B = \parens[\big]{
    e_1,\ldots,e_n, 
    (e_2 - e_1)/\sqrt{2}, \ldots, (e_n - e_{n-1})/\sqrt{2} }
  \,.
\end{equation*}
So $\G(B)$ must contain the reflection groups corresponding to 
the lasso and fused lasso as subgroups.  
The effect of the reflections in these subgroups on the standard basis 
are 
\begin{equation*}
  S_{e_j} = \text{sign change on $e_j$}
\end{equation*}
and
\begin{equation*}
  S_{e_{j+1} - e_j} = \text{interchange of $e_j$ and $e_{j+1}$}
  \,.
\end{equation*}
So $\G(B)$ must be isomorphic to the semidirect product 
$(\Integer_2)^n \rtimes \PermGroup_n$ \citep[10]{kane:reflection}, 
and it acts on $\Real^n$ by sign change and permutations. 
Then by \Cref{thm:solar_minimality}, among expofam-type estimators 
with a sparse fused lasso penalty and generators invariant under sign 
changes and permutations, the penalized least squares estimator is 
computationally minimal.

\subsection{Trend filtering}
Our next example is more of a nonexample. The base vectors of the trend filtering penalty are 
\begin{equation*}
    B = \parens[\Big]{ (e_{j+2} - 2 e_{j+1} + e_j) / \sqrt{6} \given j=1,\ldots,n-2}
    \,.
\end{equation*}
In an attempt to classify the group $\G(B)$ we compute the angle between pairs 
of base vectors. In this way we obtain the half-angle of rotation of the 
composition of pairs of reflections $S_{r_i} S_{r_j}$:
\begin{equation*}
    \arccos( \innerp{r_i}{r_j} )
    = 
    \begin{cases}
           0 & \text{if $\abs{i-j} = 0$,} \\
        \arccos(2/3) & \text{if $\abs{i-j} = 1$,} \\
        \arccos(1/6) & \text{if $\abs{i-j} = 2$,} \\
           \pi/2 & \text{otherwise.}
    \end{cases}
\end{equation*}
Note that some of these angles are irrational multiples of $\pi$ 
\citep{varona:rational}, so the subgroups generated by each  
single rotation $S_{r_i} S_{r_j}$, $i=1$ or $i=2$, is of infinite order and 
isomorphic to the infinite dihedral group \citep[5]{dolgachev:reflection}. 
Since $\G(B) \subseteq \OrthoGroup(\X)$, 
perhaps the most practically useful statement we can make at the moment is that \Cref{thm:solar_minimality} is also true with $\G = \OrthoGroup(X)$. So 
least squares trend filtering is computationally minimal for the subcollection 
of expofam-type estimators with orthogonally invariant generators.

\subsection{General solar penalties}
\label{sec:general_solar_penalties}
As we have seen in the preceding examples, the reflection group generated 
by the base vectors $B = (r_1,\ldots,r_m)$ 
of a solar penalty depends on the arrangement of the hyperplanes 
$H_r$, $r \in B$.  The resulting group $\G(B)$ may or may not be finite. 
The general theory of reflection groups is beyond the scope of this paper, 
but the book by \citet{kane:reflection} is great reference. In the case of 
trend filtering, $\G(B)$ failed to be finite because 
\begin{equation*}
    \frac{1}{\pi} \arccos(\innerp{r_i}{r_j}) = q_{ij}
\end{equation*}
was not rational for all $i,j$.  We can easily verify that $q_{ij}$ 
is rational for all of the other examples.  This is necessary and sufficient 
for $\G(B)$ to be a finite reflection group 
\citep[see][Chapter 6]{kane:reflection}.
 
\section{Discussion}
\label{sec:discussion}
Our main example explains within the computational sufficiency framework 
a deep explanation for phenomena discovered by \citet{barlow.brunk:isotonic} 
and \citet{dumbgen.kovac:extensions} for isotonic regression and 1d total 
variation denoising, respectively.  As we mentioned in the introduction, this 
has implications for both computation and inference.  There is, however, some 
limits to what the existing theory can provide.  Our final example of trend 
filtering showed that in some cases, the reflection group associated with a 
solar penalty may not have an easy interpretation.  Nonetheless, we believe 
that the theory developed in this paper can have application beyond the 
examples considered.  Here we suggest three possible directions for future 
research.

\paragraph{Extending to the generalized group lasso}
An immediate question raised by this work is how to extend the results to 
generalized group lasso penalties.  The use of the word ``group'' here 
refers to grouping of a variables as in the original group lasso paper 
\citep{yuan.lin:model}.  This is important for the extension of fused lasso 
on a graph to multivariate observations and also for additive models \citep[see, e.g.,][]{petersen.witten.ea:fused}. One special case that has attracted 
much attention recently is the so-called convex fusion clustering 
\citep[e.g.,][]{hocking.joulin.ea:clusterpath} which is a convex relaxation of 
hierarchical clustering. One obvious way forward is to define a generalization 
of the solar penalty as a Minkowski sum of line segments, rays, and norm 
balls. Then most of the analysis in the paper should carry through, with 
block coordinate descent replacing coordinate descent.  The main challenge 
will be finding a good notation system and identifying the corresponding 
groups.

\paragraph{Constructing solar penalties from a given reflection group}
Rather than applying the theory to an existing generalized lasso penalty, 
we could instead start from some known reflection group for which we would 
like to maintain invariance. \Cref{sec:general_solar_penalties} suggests that 
we can construct a solar penalty by taking as base vectors the normal vectors 
of any generating set of reflections. This is related to concept of root 
systems and fundamental systems in the theory of reflection groups 
\citep[see][Chapters 2---3]{kane:reflection}.  Alternatively, we could try 
to perturb the base vectors of trend filtering to obtain a more friendly group 
while still retaining desirable properties of trend filtering.

\paragraph{Exploiting group structure and geometry to develop efficient algorithms}
The proof of \Cref{thm:minimal_norm_solar} using dual coordinate descent 
suggests that the reflection group $\G(B)$ has an 
intrinsic role in iterative optimization.  This is certainly the case 
for least squares regression problems.  The optimality conditions 
for the dual problem involve normal cones of the dual feasible set 
$x - Z(B,\Lambda)$.  These normal cones are related to the arrangement of 
hyperplanes corresponding to the base vectors and also to the 
Weyl chambers of the reflection group $\G(B)$.  Is there a way to use this 
group structure and geometry to develop a more efficient algorithm for solving 
the dual problem?
 
\section*{Acknowledgments}
This work was supported by the National Science Foundation under Grant No. 
DMS-1513621. The topic was inspired by conversations that took place during 
the Statistical Scalability programme at the Isaac Newton Institute for 
Mathematical Sciences.  Thanks the institute for its hospitality, and to 
Francis Bach and Ryan Tibshirani for their questions. Part of this research 
was completed while the author was visiting Keio University. Thanks to Kei 
Kobayashi for his hospitality. 
\appendix

\section{Additional proofs}
\label{sec:proofs}
\subsection{Proof of Lemma~\ref{lem:g_minimal_equivalance}}
\begin{proof}
\Implies{case:g_minimal}{case:g_minimizes_convex_functions}: 
By \Cref{lem:g_majorization_equivalence}, 
\begin{equation*}
    f(z) \leq f(z)
\end{equation*}
for all $z \in K$. Then
\begin{equation*}
    f(z) \leq \inf_{z \in K} f(z) \,,
\end{equation*}
and we must have equality because $z \in K$.

\Implies{case:g_minimizes_convex_functions}{case:g_minimal}: 
For any fixed $u \in \X$, the function 
\begin{align*}
    f_u(t)
    &= h_{\G\cdot t}(u) \\
    &= \sup_{g \in \G} \innerp{g \cdot t}{u} \\
    &= \sup_{g \in \G} \innerp{t}{g^{-1} \cdot u} 
\end{align*}
is $\G$-invariant and convex, 
because it is the pointwise supremum of a family of convex functions 
\citep[Proposition 8.16]{bauschke.combettes:convex}. Then for $z \in K$, 
\begin{align*}
    h_{\G\cdot y}(u) 
    &= f_u(y) \\
    &= \inf_{w \in K} f_u(w) \\
    &\leq f_u(z) = h_{\G\cdot z}(u) \,,
\end{align*}
and by \Cref{lem:g_majorization_equivalence}, $y \preceq_\G z$.
\end{proof}

\subsection{Proof of Lemma~\ref{lem:g_minimality_criteria}}
\begin{proof}
\ref{statement:g_minimality_trivial}: This is trivial.

\ref{statement:g_minimality_increasing}:
By \Cref{lem:g_majorization_equivalence}, 
if $y \preceq_\mathcal{H} z$, then 
\begin{equation*}
    y \in \conv(\mathcal{H} \cdot y) \subseteq \conv(\mathcal{G} \cdot z)
\end{equation*}
and $y \preceq_\G z$.

\ref{statement:g_minimality_orthogonal}: 
It is easy to see that 
$y \preceq_{\OrthoGroup(\X)} y$ if and only if $\norm{y} \leq \norm{z}$. 
So $y$ is $\OrthoGroup(\X)$-minimal in $K$ if and only if 
\begin{equation*}
    \norm{y} = \inf_{z \in K} \norm{z} \,.
\end{equation*}
Since $K$ is closed and convex, such an element exists uniquely.

\ref{statement:g_minimality_minimum_norm}: 
By \ref{statement:g_minimality_increasing} and 
\ref{statement:g_minimality_orthogonal}, if $y$ is $\G$-minimal in $K$, then 
it must be the unique $\OrthoGroup(\X)$-minimal element of $K$, which, 
as shown above, is the minimum norm element.
\end{proof}

\subsection{Proof of Theorem~\ref{thm:least_squares_minimal}}
\begin{proof}
We will prove the following claims:
\begin{enumerate}[label=(\arabic*),ref=(\arabic*)]
    \item \label{case:u_is_g_minimal}
    $U(x)$ is $\G$-minimal in $x - C$,
    \item \label{case:u_is_dual_optimal}
    $U(x)$ is dual optimal for every $T \in \mathcal{M}$
    \item \label{case:primal_recovery}
    \Cref{eq:primal_recovery} recovers the primal solutions, and
    \item \label{case:lse}
    $U \in \mathcal{M}$.
\end{enumerate}
\paragraph{Claim \ref{case:u_is_g_minimal}}
This is immediate from \Cref{thm:g_minimality_and_minimal_norm}.

\paragraph{Claim \ref{case:u_is_dual_optimal}}
Let $x \in \X$, $y_* = U(x)$, and fix $T \in \mathcal{M}$ with generator 
$\phi$.  The dual problem to $T(x)$ is 
\begin{equation*}
    - \min_{y \in x - C} \phi^*(y) \,.
\end{equation*}
Note that $\phi^*$ is $\G$-invariant and convex. 
If this dual problem has a solution, then it follows from 
the $\G$-minimality of $y_*$ in $x - C$ that $y_*$ is a solution.

\paragraph{Claim \ref{case:primal_recovery}}
\Cref{assumption:strong_duality} ensures that strong duality holds so that a primal 
and dual solution pair $(\theta, x - z)$ are related by the equations
\begin{align*}
    x - z &\in \partial \phi(\theta) \\
    \theta &\in N_C(z) \,.
\end{align*}
This in turn is equivalent to
\begin{align*}
    \innerp{x - z}{\theta} &= \phi^*(x - z) + \phi(\theta) \\
    \innerp{z}{\theta} &= h_C(\theta) \,.
\end{align*}
Thus,
\begin{equation*}
    T(x) 
    = \argmin_\theta \phi(\theta) - \innerp{y}{\theta} + h_C(\theta) \,.
\end{equation*}
for any dual solution $y$. Conversely, if $T(x)$ is nonempty 
then a dual solution exists, and by Claims 1 and 2, 
$y_*$ is a dual solution and \Cref{eq:primal_recovery} holds.

\paragraph{Claim \ref{case:lse}}
Let $\phi = \frac{1}{2} \norm{}^2$. 
\Cref{assumption:strong_duality} holds in this case and $\phi$ is 
$\G$-invariant, because $\G \subseteq \OrthoGroup(X)$. Then by applying \Cref{eq:primal_recovery}, 
\begin{equation*}
    \argmin_\theta \phi(\theta) - \innerp{U(x)}{\theta}
    = \argmin_\theta \frac{1}{2} \norm{\theta - U(x)}^2
    = U(x) \,.
\end{equation*}
So $U \in \mathcal{M}$ is computationally necessary and hence computationally minimal.
\end{proof}

\subsection{Proof of Proposition~\ref{pro:graph_fused_permutation}}
\begin{proof}
The reflections associated with $B$ are transpositions of the form 
$(i,j) \in \mathcal{E}$. Since the graph is connected, 
between any pair of vertices, say $u,v$, there exists a path. Then the 
product of the transpositions of the edges along the graph, taken in order 
from $u$ to $v$, is simply the transposition $(u,v)$.  For example,
\begin{equation*}
    (u, i_1) (i_1, i_2) (i_2, v)
    = (u, i_2) (i_2, v)
    = (u, v)
    \,.
\end{equation*}
Since $\G(B)$ contains all transpositions it must be $\PermGroup_n$.
\end{proof}  
\printbibliography
\end{document}